\newcommand{\mvec}[1]{\boldsymbol{#1}}
\newcommand{\RR}{\mathcal{R}}
\newcommand{\B}{\{0,1\}}
\newcommand{\R}{\mathbb{R}}
\newcommand{\Z}{\mathbb{Z}}
\newtheorem{theorem}{Theorem}
\newtheorem{lemma}[theorem]{Lemma}
\newtheorem{definition}[theorem]{Definition}
\newtheorem{example}[theorem]{Example}
\title{Binary Matrix Factorisation via Column Generation}
\author {
    R\'eka \'A. Kov\'acs,\textsuperscript{\rm 1}
    Oktay G\"unl\"uk, \textsuperscript{\rm 2}
    Raphael A. Hauser \textsuperscript{\rm 1} \\
}
\begin{document}

\maketitle

\begin{abstract}
Identifying discrete patterns in binary data is an important dimensionality reduction tool in machine learning and data mining. 
In this paper, we consider the problem of low-rank binary matrix factorisation (BMF) under Boolean arithmetic.
Due to the hardness of this problem, most previous attempts rely on heuristic techniques.
We formulate the problem as a mixed integer linear program and use a large scale optimisation technique of column generation to solve it without the need of heuristic pattern mining. 
Our approach focuses on accuracy and on the provision of optimality guarantees. 
Experimental results on real world datasets demonstrate that our proposed method is effective at producing highly accurate factorisations and improves on the previously available best known results for $15$ out of $24$ problem instances.
\end{abstract}

\section{Introduction}




Low-rank matrix approximation is an essential tool for dimensionality reduction in machine learning. 
For a given $n\times m$ data matrix $X$ whose rows correspond to $n$ observations or items, columns to $m$ features and a fixed positive integer $k$, computing an optimal rank-$k$ approximation consists of approximately factorising $X$ into two matrices $A$, $B$ of dimension $n\times k $ and $k\times m$ respectively, so that the discrepancy between $X$ and its rank-$k$ approximate $A\cdot B$ is minimum. 
The rank-$k$ matrix $A\cdot B$ describes $X$ using only $k$ derived features: the rows of $B$ specify how the original features relate to the $k$  derived features, while the rows of $A$ provide weights how each observation can be (approximately) expressed as a linear combination of the $k$ derived features.

Many practical datasets contain observations on categorical features and while classical methods such as singular value decomposition (SVD) \cite{SVD} and non-negative matrix factorisation (NMF) \cite{NMF} can be used to obtain low-rank approximates for real valued datasets, for a binary input matrix $X$ they cannot guarantee factor matrices $A,B$ and their product to be binary.
Binary matrix factorisation (BMF) is an approach to compute low-rank matrix approximations of binary matrices ensuring that the factor matrices are binary as well \cite{Miettinen:2012:PKDD:Tutorial:On:BMF}.
More precisely, for a given binary matrix $X\in\B^{n\times m}$ and a fixed positive integer $k$, the rank-$k$ BMF problem ($k$-BMF) asks to find two matrices $A\in \B^{n\times k}$ and $B\in \B^{k\times m}$ such that the product of $A$ and $B$ is a binary matrix denoted by $Z$, and the distance between $X$ and $Z$ is minimum in the squared Frobenius norm.
Many variants of $k$-BMF exist, depending on what arithmetic is used when the product of matrices $A$ and $B$ is computed.  
We focus on a variant where the Boolean arithmetic is used:
\\[1mm]
$~~~~~~~~~~~~~~~~X=A\circ B  \iff x_{ij} = \bigvee_{\ell=1}^k a_{i\ell} \wedge b_{\ell j},$\\[2mm]
so that $1$s and $0$s are interpreted as True and False, addition corresponds to logical disjunction ($\vee$) and multiplication to conjunction ($\wedge$). Apart from the arithmetic of the Boolean semi-ring, other choices include standard arithmetic over the integers or modulo $2$ arithmetic over the binary field. 
We focus on the Boolean case, in which the property of Boolean non-linearity, 1 + 1 = 1 holds because many natural processes follow this rule. For instance, when diagnosing patients with a certain condition, it is only the presence or absence of a characteristic symptom which is important, and the frequency of the symptom does not change the 
diagnosis.
As an example, consider the matrix (inspired by \cite{Miettinen:2008:DBP:1442800.1442809})  $$X= \left[\begin{smallmatrix} 1 & 1 & 0\\
1 & 1 & 1\\
0 & 1 & 1\end{smallmatrix}\right]$$
where rows correspond to patients and columns to symptoms, $x_{ij}=1$ indicating patient $i$ presents symptom $j$. Let 
$$X=A\circ B = \left[ \begin{smallmatrix}
1 & 0 \\
1 & 1 \\
0 & 1
\end{smallmatrix} \right] 
\circ 
\left[ \begin{smallmatrix}
1 & 1 & 0 \\
0 & 1 & 1
\end{smallmatrix} \right]$$ denote the rank-2 BMF of $X$. Factor $B$ reveals that 2 underlying diseases cause the observed symptoms, $\alpha$ causing symptoms 1 and 2, and $\beta$ 
causing 2 and 3. Factor $A$ reveals that patient 1 has disease $\alpha$, patient 3 has $\beta$ and patient 2 has both.
In contrast,  the best rank-2 real approximation 
$$X\approx
\left[
\begin{smallmatrix}
1.21&0.71\\
1.21 &0.00\\
1.21 &-0.71\\
\end{smallmatrix}
\right]
\left[
\begin{smallmatrix}
0.00&0.71&0.50\\
0.71& 0.00&-0.71
\end{smallmatrix}
\right]
$$ fails to reveal a clear interpretation, and  the best rank-2 NMF  
$$X\approx\left[
\begin{smallmatrix}
1.36 &0.09\\
1.05 &1.02\\
0.13 &1.34
\end{smallmatrix}\right]
\left[\begin{smallmatrix}
0.80 &0.58 &0.01\\
0.00     &     0.57 &0.81\end{smallmatrix}\right]$$
of $X$ suggests that symptom 2 presents with lower intensity in both $\alpha$ 
and $\beta$, an erroneous conclusion (caused by patient 2) that could not have been learned from data $X$ which is of ``on/off'' type. 
BMF-derived features are particularly natural to interpret in biclustering gene expression datasets \cite{Zhang:2007}, role based access control \cite{Lu:2008:OBM:1546682.1547186,Lu:2014:9400730720140101} and market basket data clustering \cite{Li:2005}.

\subsection{Complexity and Related Work}
\label{comp}
%
The Boolean rank of a binary matrix $X\in\B^{n\times m}$ is defined to be the smallest integer $r$ for which there exist matrices $A\in \B^{n\times r}$ and $B\in \B^{r\times m}$  such that $X=A\circ B$, where $\circ$ denotes Boolean matrix multiplication defined as $x_{ij} = \bigvee_{\ell=1}^r a_{i\ell} \wedge b_{\ell j}$ for all $i\in\{1,\dots,n \}:=[n]$, $j\in[m]$ \cite{Kim:1982}. This is equivalent to $x_{ij} = \min\{1,\sum_{\ell=1}^r a_{i\ell} b_{\ell j}\}$ using standard arithmetic. 
%
Equivalently, the Boolean rank of $X$ is the minimum value of $r$ for which it is possible to factor $X$ into the Boolean combination of $r$ rank-$1$ binary matrices $X = \bigvee_{\ell=1}^r \mvec{a}_\ell \mvec{b}^\top_\ell$ for 
$\mvec{a}_\ell \in\B^{n},\mvec{b}_\ell \in \B^m$. 
Interpreting $X$ as the node-node incidence matrix of a bipartite graph $G$ with $n$ vertices on the left and $m$ vertices on the right, the problem of computing the Boolean rank of $X$ is in one-to-one correspondence with finding a minimum edge covering of $G$ by complete bipartite subgraphs (bicliques)\cite{Monson:1995}. Since the biclique cover problem is NP-hard \cite{Orlin:1977}
and hard to approximate \cite{Simon:1990OnAS}, computing the Boolean rank is hard as well.
Finding an optimal $k$-BMF of $X$ has a graphic interpretation of minimizing the number of errors in an approximate covering of $G$ by $k$ bicliques. 
Even the computation of $1$-BMF is hard \cite{Gillis:2018}, and can be stated in graphic form as finding a maximum weight biclique of $K_{n,m}$ with edge weights $1$ for $(i,j): x_{ij}=1$ and $-1$ for $(i,j): x_{ij}=0$.


%
%


Many heuristic attempts have been made to approximately compute BMFs by focusing on recursively partitioning the given matrix $X\in\B^{n\times m}$ and computing a $1$-BMF at each step. 
The first such recursive method called Proximus
\cite{Koyuturk:2002:ATA:645806.670310} 
is used to compute BMF under standard arithmetic over the integers. 
For $1$-BMF Proximus uses an alternating iterative heuristic applied to a random starting point which 
is based on the observation that if $\mvec{a}\in \B^n$ is given, then a vector $\mvec{b}\in \B^m$ that minimizes  the distance between $X$ and $\mvec{a}\mvec{b}^\top$ can be computed in $\mathcal{O}(nm)$ time.
%
Since the introduction of Proximus, much research focused on computing efficient and accurate $1$-BMF. 
\cite{Shen:2009:MDP:1557019.1557103} propose an integer program (IP) for $1$-BMF and several linear programming (LP) relaxations of it, one of which leads to a $2$-approximation.
\cite{Shi:2014:6899417} provide a rounding based $2$-approximation for $1$-BMF by using an observation about the vertices of the polytope corresponding to the LP relaxation of an integer program. In \cite{Beckerleg:2020} a modification of the Proximus framework is explored using the approach of \cite{Shen:2009:MDP:1557019.1557103} to compute $1$-BMF at each step.

$k$-BMF under Boolean arithmetic is explicitly introduced in  
\cite{Miettinen:2006:PKDD,Miettinen:2008:DBP:1442800.1442809}, along with a heuristic algorithm called ASSO. The core of ASSO is based on an association rule-mining approach to create matrix $B\in\B^{k\times m}$ and greedily fix $A\in\B^{n\times k}$ with respect to $B$. 
The problem of finding an optimal $A$ with respect to fixed $B$ is NP-hard \cite{Miettinen:2008:PPS:1412758.1412985} but can be solved in $\mathcal{O}(2^k kmn)$ time \cite{Miettinen:2008:DBP:1442800.1442809}.
The association rule-mining approach of \cite{Miettinen:2008:DBP:1442800.1442809} is further improved in \cite{Barahona:2019} by a range of iterative heuristics employing this alternative fixing idea and local search. Another approach based on an alternating style heuristic is explored in \cite{Zhang:2007} to solve a non-linear unconstrained formulation of $k$-BMF with penalty terms in the objective for non-binary entries. \cite{Wan:2020} proposes another iterative heuristic which at every iteration permutes the rows and columns of $X$ to create a dense submatrix in the upper right corner which is used as a rank-1 component in the $k$-BMF.

%
In \cite{Lu:2008:OBM:1546682.1547186,Lu:2014:9400730720140101} an exponential size IP for $k$-BMF is introduced, which uses an explicit enumeration of all possible rows for factor matrix $B$ and corresponding indicator variables. 
To tackle the exponential explosion, 
a heuristic row generation using association rule mining and subset enumeration is developed, but no non-heuristic method is considered.
%
{An exact linear IP for $k$-BMF with polynomially many variables and constraints is presented in \cite{Kovacs:2017}.
	This model uses McCormick envelopes \cite{McCormick:1976} to linearise quadratic terms.}

\subsection{Our Contribution}

In this paper, we present 
a novel IP formulation for $k$-BMF that overcomes several limitations of earlier  approaches. In particular, our formulation does not suffer from permutation symmetry, it does not rely on heuristic pattern mining, and it has a stronger LP relaxation than that of \cite{Kovacs:2017}. 
On the other hand, our new formulation has an exponential number of variables which we tackle using a column generation approach that effectively searches over this exponential space without  explicit enumeration, unlike the complete enumeration used for the exponential size model of \cite{Lu:2008:OBM:1546682.1547186,Lu:2014:9400730720140101}.
Our proposed solution method is able to prove optimality for smaller datasets, while for larger datasets it provides solutions with better accuracy than the state-of-the-art heuristic methods. 
In addition, due to the entry-wise modelling of $k$-BMF in our approach, we can handle matrices with missing entries and our solutions can  be used for binary matrix completion.

The rest of the paper is organised as follows. 
In Section \ref{section_formulation} we briefly discuss the model of \cite{Kovacs:2017} and its limitations. 
In Section \ref{CG} we introduce our integer programming formulation for $k$-BMF, detail a theoretical framework based on the large scale optimisation technique of column generation for its solution and discuss heuristics for the arising pricing subproblems. Finally, in Section \ref{experiments} we demonstrate the practical applicability of our approach on several real world datasets.

\section{Problem Formulation}
\label{section_formulation}

Given a binary matrix $X\in \B^{n\times m}$, and a fixed integer $k\ll \min(n,m)$ we wish to find two binary matrices $A\in \B^{n\times k}$ and $B\in \B^{k \times m}$ to minimise $\|X-A\circ B \|^2_F$, where $\|\cdot\|_F$ denotes the Frobenius norm and $\circ$ stands for Boolean matrix multiplication. 
Since $X$ and $Z:=A\circ B$ are binary matrices, the squared Frobenius and entry-wise $\ell_1$ norm coincide and we can expand the objective function
\begin{equation}
\label{frob_obj}
\|X-Z\|^2_F =
\sum_{(i,j) \in E} (1-z_{ij}) +\sum_{(i,j) \not \in E} z_{ij},
\end{equation}
where $E:=\{(i,j): x_{ij}=1\}$ is the index set of the positive entries of $X$. 
\cite{Kovacs:2017} formulate the problem as an exact integer linear program by introducing variables $y_{i\ell j}$ for the product of $ a_{i\ell}$ and $b_{\ell j}$ $\left( (i,\ell,j)\in \mathcal{F}:= [n] \times [k] \times [m] \right)$, and using McCormick envelopes \cite{McCormick:1976} to avoid the appearance of a quadratic constraint arising from the product. McCormick envelopes represent the product of two binary variables $a$ and $b$ by a new variable $y$ and four linear inequalities given by $MC(a,b) = \{y \in \R: a+b -1 \le y, y\le a, y\le b, 0\le y \}$. The model of \cite{Kovacs:2017} reads as
\begin{align}
(\text{IP}_{\text{exact} })
\;\; 
& \zeta_{IP}= \min_{a,b,y,z} \sum_{(i,j) \in E} (1-z_{ij}) &+\sum_{(i,j) \not \in E} z_{ij}  \label{exact_obj}\\
\text{s.t. }
& y_{i\ell j}\le z_{ij} \le \sum_{\ell=1}^k y_{i\ell j},&
(i,\ell,j) \in \mathcal{F},
\label{sum}\\
& y_{i\ell j}\in MC(a_{i\ell},b_{\ell j}), &(i,\ell,j) \in \mathcal{F},\\
& a_{i\ell},b_{\ell j} \in \B, ~~~ z_{ij} \le 1 , &
(i,\ell,j) \in \mathcal{F}.
\label{McCormick}
\end{align}
The above model is exact in the sense that its optimal solutions correspond to optimal $k$-BMFs of $X$. Most general purpose IP solvers use an enumeration framework, which relies on bounds from the LP relaxation of the IP and consequently, it is easier to solve the IP when its LP bound is tighter.
For $k=1$, we have $y_{i 1 j} = z_{ij}$ for all $i,j$ and the LP relaxation of the model is simply the LP relaxation of the McCormick envelopes which has a rich and well-studied polyhedral structure \cite{Padberg:1989:BQP:70486.70495}.
%
%
However, for $k>1$, $\text{IP}_{\text{exact}}$'s LP relaxation ($\text{LP}_\text{exact}$) only provides a trivial bound.
\begin{lemma}
	\label{lp_0_lemma_text}
	For $k>1$, $\text{LP}_\text{exact}$ has optimal objective value $0$ which is attained by at least $\binom{k}{2}$ solutions.
\end{lemma}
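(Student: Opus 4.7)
The objective (\ref{exact_obj}) is a sum of non-negative terms $(1-z_{ij})$ over $(i,j)\in E$ and $z_{ij}$ over $(i,j)\notin E$, and the LP relaxation retains $z_{ij}\le 1$, so the objective is bounded below by $0$. Any LP-feasible point attaining this bound must satisfy $z_{ij}=x_{ij}$ for every $(i,j)$. My plan is therefore to exhibit, for each unordered pair $\{\ell_1,\ell_2\}\subseteq[k]$ of distinct indices, a feasible point of $\text{LP}_{\text{exact}}$ with $z=X$; this simultaneously establishes the value $0$ and the count $\binom{k}{2}$.

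The construction for a fixed pair $\{\ell_1,\ell_2\}$ is to split the factorisation uniformly across the two chosen components. I would set $a_{i\ell}=b_{\ell j}=1/2$ for all $i,j$ whenever $\ell\in\{\ell_1,\ell_2\}$ and $a_{i\ell}=b_{\ell j}=0$ otherwise; define $y_{i\ell j}=x_{ij}/2$ for $\ell\in\{\ell_1,\ell_2\}$ and $y_{i\ell j}=0$ otherwise; and put $z_{ij}=x_{ij}$.

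Verification then reduces to routine checks. For $\ell\in\{\ell_1,\ell_2\}$ the McCormick constraints read $0\le x_{ij}/2\le 1/2$ and $x_{ij}/2\ge 1/2+1/2-1=0$, all of which hold since $x_{ij}\in\{0,1\}$. For the remaining indices, $y_{i\ell j}=0$ trivially lies in $MC(0,0)$. The linking inequalities (\ref{sum}) become $x_{ij}/2\le x_{ij}$ and $x_{ij}\le x_{ij}/2+x_{ij}/2$, both immediate, and $z_{ij}=x_{ij}\le 1$. Each of the $\binom{k}{2}$ pairs gives a distinct feasible solution of objective value $0$, completing the argument.

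There is no serious technical obstacle; the only nontrivial part is guessing the right fractional point. The key observation that makes the construction work—and that highlights why $\text{LP}_{\text{exact}}$ is weak for $k>1$—is that once two ``half-weight'' components $\ell_1,\ell_2$ are available, the sum-side constraint $z_{ij}\le\sum_\ell y_{i\ell j}$ can be saturated at $1$ even though each individual $y_{i\ell j}$ is capped at $1/2$ by the McCormick upper bounds $y\le a$, $y\le b$. This slack is precisely what a stronger formulation (such as the one proposed later in the paper) would need to eliminate.
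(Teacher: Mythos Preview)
Your proof is correct and follows essentially the same approach as the paper: both exploit that with two ``half-weight'' components one can set $y_{i\ell_1 j}=y_{i\ell_2 j}=\tfrac12$ on $E$ so that $\sum_\ell y_{i\ell j}=1$ saturates $z_{ij}=1$, while McCormick still permits $y_{i\ell j}=0$ off $E$. The only cosmetic difference is how the $\binom{k}{2}$ solutions are parameterised---the paper keeps $a_{i\ell}=b_{\ell j}=\tfrac12$ for \emph{all} $\ell$ and varies only which pair of $y$-layers is active, whereas you zero out the unused $a,b$ layers as well; both are valid and the underlying idea is identical.
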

For the proof of Lemma \ref{lp_0_lemma_text}, see Appendix \ref{lp_0}.
Furthermore, for $k>1$ the model is highly symmetric, since $A P \circ P^{-1} B$ is an equivalent solution for any permutation matrix $P$. These properties of the model make it unlikely to be solved to optimality in a reasonable amount of time for a large matrix $X$, though the symmetries can be partially broken by incorporating constraints $\sum_{i}a_{i\ell_1}\geq\sum_{i}a_{i\ell_2}$ for all $\ell_1<\ell_2$. 

Note that constraint \eqref{sum} implies $\frac{1}{k}\sum_{\ell=1}^k y_{i\ell j} \le z_{ij}  \le\sum_{\ell=1}^k y_{i\ell j}$ as a lower and upper bound on each variable $z_{ij}$.
Hence, the objective function may be approximated by 
\begin{equation}
\zeta_{IP}(\rho ) = \sum_{(i,j) \in E} (1-z_{ij} )+\rho \sum_{(i,j) \not \in E} \sum_{\ell=1}^k y_{i\ell j} \label{app_obj},
\end{equation}
where $\rho$ is a parameter of the formulation. By setting $\rho=\frac{1}{k}$ we underestimate the original objective, while setting $\rho=1$ we overestimate. Using \eqref{app_obj} as the objective function reduces the number of variables and constraints in the model. Variables $z_{ij}$ need only be declared for $(i,j)\in E$, and constraint \eqref{sum} simplifies to $ z_{ij} \le\sum_{\ell=1}^k y_{i\ell j} $ for $(i,j)\in E$. 



\vspace{-0.5em}
\section{A Formulation via Column Generation}
\label{CG}
The exact model presented in the previous section relies on polynomially many constraints and variables and constitutes the first approach towards obtaining $k$-BMF 
with optimality guarantees. However, such a compact IP formulation may be weak in the sense that its LP relaxation is a very coarse approximation to the convex hull of integer feasible points and an IP formulation with exponentially many variables or constraints can have the potential to provide a tighter relaxation\cite{Lubbecke:2005}. Motivated by this fact, we introduce a new formulation with an exponential number of variables and detail a column generation framework for its solution.

{Consider enumerating all possible rank-$1$ binary matrices of size $n\times m$ and let 
	\begin{equation*}
	\RR = \{\mvec{a} \mvec{b}^\top: \mvec{a}\in \B^n, \mvec{b}\in \B^m , \mvec{a},\mvec{b} \neq \mvec{0}\} .
	\end{equation*}
	The size of $\RR$ is $|\RR| = (2^n-1)(2^m-1)$ as any pair of binary vectors 
	{\small $\mvec{a},\mvec{b}\neq \mvec{0}$}  leads to a unique rank-1 matrix  {\small $Y=\mvec{a}\mvec{b}^\top$} with 
	{\small $Y_{ij}=1$} for {\small $\{ (i,j): a_i=1, b_j=1\}$}.
	%
	Define a binary decision variable $q_\ell$ to denote if the $\ell$-th rank-$1$ binary matrix in $ \RR$ is included in a rank-$k$ factorisation of $X$ ($q_\ell=1$), or not  ($q_\ell=0$).
}
Let $\mvec{q} \in \B^{|\RR|}$ be a vector that has a component $q_\ell$ for each matrix in $\RR$.
We form a $\B$-matrix $M$ of dimension  $ nm \times |\RR|$ whose rows correspond to entries of an $n\times m$ matrix, columns to rank-$1$ binary matrices in $\RR$  and $M_{(i,j),\ell}=1$ if the $(i,j)$-th entry of the $\ell$-th rank-$1$ binary matrix in $ \RR$ is $1$, $M_{(i,j),\ell}=0$ otherwise.
We split $M$ horizontally into two matrices $M_{\mvec{0}}$ and $M_{\mvec{1}}$, so that rows of $M$ corresponding to a positive entry of the given matrix $X$  are in $M_{\mvec{1}}$ and the rest of rows of $M$ in $M_{\mvec{0}}$,
\begin{equation}
\label{partition}
M = \begin{bmatrix}
M_{\mvec{0}} \\
M_{\mvec{1}} 
\end{bmatrix}
\;\text{where} 
\begin{array}{l}
M_{\mvec{0}}  \in \B^{(nm-|E|) \times |\RR|}, \\
M_{\mvec{1}}  \in \B^{|E|\times |\RR|}.
\end{array}
\end{equation}
The following Master Integer Program over an exponential number of variables is an exact model for $k$-BMF,
\begin{align}
(\text{MIP}_{\text{exact}}) \; \zeta_{\text{MIP}} = \min \;& \mvec{1}^\top \mvec{\xi}  +  \mvec{1}^\top\mvec{\pi}  \\
\text{s.t. }
& M_{\mvec{1}} \mvec{q} +\mvec{\xi}\ge \mvec{1}  \label{ones}\\
& M_{\mvec{0}} \mvec{q}  \le k \mvec{\pi}\ \label{zeros}\\
&\mvec{1}^\top \mvec{q} \le k & \label{k_rectangles}\\
& \mvec{\xi} \ge \mvec{0}, \mvec{\pi} \in \B^{nm-|E|} ,\\
& \mvec{q} \in \B^{|\RR|}. \label{integral_mip_exact}
\end{align}
Constraint \eqref{k_rectangles} ensures that at most $k$ rank-$1$ matrices are active in a factorisation. 
Variables $\xi_{ij}$ correspond to positive entries of $X$, and are forced by constraint \eqref{ones}  to take value $1$ and increase the objective if the $(i,j)$-th positive entry of $X$ is not covered. 
Similarly, variables $\pi_{ij}$ correspond to zero entries of $X$ and are forced to take value $1$ by constraint \eqref{zeros} if the $(i,j)$-th zero entry of $X$ is erroneously covered in a factorisation. 
One of the imminent advantages of $\text{MIP}_{\text{exact}}$ is using indicator variables directly for rank-1 matrices instead of the entries of factor matrices $A,B$, hence no permutation symmetry arises. In addition, for all $k$ not exceeding a certain number that depends on $X$, the LP relaxation of $\text{MIP}_{\text{exact}}$ ($\text{MLP}_{\text{exact}}$) has strictly positive optimal objective value.
\begin{lemma} 
	\label{lp_NOT_zero_text} Let $i(X)$ be the isolation number of $X$.
	For all $k<i(X)$, we have $0<\zeta_\text{MLP}  $.
\end{lemma}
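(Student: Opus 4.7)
The plan is to argue by contradiction: suppose $\zeta_{\text{MLP}} = 0$. Since the variables $\mvec{\xi}$ and $\mvec{\pi}$ both appear in the objective with unit coefficients and are non-negative in every LP-feasible solution, an optimum of value $0$ forces $\mvec{\xi}=\mvec{0}$ and $\mvec{\pi}=\mvec{0}$. Substituting into constraints \eqref{ones} and \eqref{zeros} gives the two structural conditions $M_{\mvec{1}}\mvec{q}\geq\mvec{1}$ and $M_{\mvec{0}}\mvec{q}\leq\mvec{0}$. Because $\mvec{q}\geq\mvec{0}$ and $M_{\mvec{0}}$ has non-negative entries, the second condition forces $M_{\mvec{0}}\mvec{q}=\mvec{0}$ entrywise, meaning that every index $\ell$ with $q_\ell>0$ corresponds to a rank-$1$ matrix $\mvec{a}_\ell\mvec{b}_\ell^\top\in\RR$ whose support is entirely contained in $E$ (i.e.\ covers only $1$-entries of $X$).

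The combinatorial heart of the argument is then to invoke the defining property of the isolation number. Let $S=\{(i_1,j_1),\ldots,(i_t,j_t)\}$ be an isolated set of $1$-entries of $X$ of maximum size $t=i(X)$. I would establish the following claim: any rank-$1$ matrix $\mvec{a}\mvec{b}^\top$ with support contained in $E$ hits at most one element of $S$. Indeed, if such a matrix hit both $(i_s,j_s)$ and $(i_{s'},j_{s'})$, then $a_{i_s}=a_{i_{s'}}=b_{j_s}=b_{j_{s'}}=1$, which would also place the cross entries $(i_s,j_{s'})$ and $(i_{s'},j_s)$ in $E$, contradicting the isolation property of $S$. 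This is essentially the standard observation that underlies $i(X)\leq$ Boolean rank of $X$.

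To conclude, I would sum the $t$ covering inequalities from $M_{\mvec{1}}\mvec{q}\geq\mvec{1}$ indexed by $S$ and exchange the order of summation:
\begin{equation*}
 t \;\leq\; \sum_{s=1}^{t}(M_{\mvec{1}}\mvec{q})_{(i_s,j_s)} \;=\; \sum_{\ell}q_\ell\sum_{s=1}^{t}(M_{\mvec{1}})_{(i_s,j_s),\ell} \;\leq\; \sum_{\ell}q_\ell \;\leq\; k.
\end{equation*}
The penultimate inequality uses the claim above (terms with $q_\ell=0$ drop out, and for $\ell$ in the support of $\mvec{q}$ the inner sum is at most $1$), while the final inequality is constraint \eqref{k_rectangles}. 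This yields $i(X)\leq k$, contradicting the hypothesis $k<i(X)$, and hence $\zeta_{\text{MLP}}>0$.

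The main obstacle is cleanly isolating the combinatorial claim in the second paragraph; everything else is essentially bookkeeping on non-negative LP variables. It is worth noting that the hypothesis $k<i(X)$ is used in exactly two places: implicitly, to ensure that an isolated set of size $t>k$ exists, and explicitly, in the final chain of inequalities where $\sum_\ell q_\ell\leq k$ is compared against $t$.
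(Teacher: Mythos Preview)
Your argument is correct and follows essentially the same route as the paper: assume $\zeta_{\text{MLP}}=0$, deduce $\mvec{\xi}=\mvec{\pi}=\mvec{0}$ so that the active rank-$1$ blocks are all submatrices of $X$, observe that any such block can contain at most one member of a maximum isolated set $S$, and then compare the total $q$-mass needed to cover $S$ against the budget constraint $\mvec{1}^\top\mvec{q}\le k$. The only cosmetic difference is that the paper phrases the last step by partitioning the active indices $\ell$ into groups $G_r$ according to which element of $S$ they cover, whereas you do the equivalent double-counting directly via $\sum_s\sum_\ell$; the content is identical.
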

For the definition of isolation number and the proof of Lemma \ref{lp_NOT_zero_text} see Appendix \ref{bound_on_lp_exact}.
Similarly to the polynomial size exact model $\text{IP}_{\text{exact}}$ in the previous section, we consider a modification of $\text{MIP}_{\text{exact}}$ with an objective that is analogous to the one in Equation \eqref{app_obj},
\begin{align}
(\text{MIP}(\rho)) \; z_{\text{MIP}(\rho)}=\min \;& \mvec{1}^\top \mvec{\xi}  +  \rho \, \mvec{1}^\top M_{\mvec{0}} \mvec{q} \label{hamming_obj}\\
\text{s.t. } & \eqref{ones},\; \eqref{k_rectangles} \text{ hold and } \\
& \mvec{\xi} \ge \mvec{0}, ~~~\mvec{q} \in \B^{|\RR|}. \nonumber
\end{align}
The objective of MIP($\rho$) simply counts the number of positive entries of $X$ that are not covered by any of the $k$ rank-1 matrices chosen, plus the number of times zero valued entries of $X$ are erroneously covered weighted by  parameter $\rho$. Depending on the selection of parameter $\rho$, $\text{MIP}(\rho)$ provides a lower or upper bound on $\text{MIP}_{\text{exact}}$.
We denote the LP relaxation of $\text{MIP}(\rho)$ by MLP($\rho$).
\begin{lemma}
	\label{lemma_coincide}
	For $\rho=\frac{1}{k}$, the optimal objective values of the LP relaxations $\text{MLP}_{\text{exact}}$ and $\text{MLP}(\frac{1}{k})$ coincide. 
\end{lemma}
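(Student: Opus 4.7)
The plan is to establish the equality by a direct two-way correspondence between feasible solutions of the two LPs. The central observation is that in $\text{MLP}_{\text{exact}}$ the variables $\mvec{\pi}$ appear only in the constraint $M_{\mvec{0}}\mvec{q}\le k\mvec{\pi}$ and additively in the objective, so for any fixed $\mvec{q}$ the optimal $\mvec{\pi}$ is componentwise minimal, namely $\pi_{ij}=\tfrac{1}{k}(M_{\mvec{0}}\mvec{q})_{ij}$. Substituting this into the objective produces exactly $\mvec{1}^\top\mvec{\xi}+\tfrac{1}{k}\mvec{1}^\top M_{\mvec{0}}\mvec{q}$, which is the objective of $\text{MLP}(\tfrac{1}{k})$.

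First I would show $\zeta_{\text{MLP}(\frac{1}{k})}\le\zeta_{\text{MLP}_{\text{exact}}}$. Given any feasible $(\mvec{\xi},\mvec{\pi},\mvec{q})$ of $\text{MLP}_{\text{exact}}$, the pair $(\mvec{\xi},\mvec{q})$ satisfies $M_{\mvec{1}}\mvec{q}+\mvec{\xi}\ge\mvec{1}$, $\mvec{1}^\top\mvec{q}\le k$, $\mvec{\xi}\ge\mvec{0}$, and $\mvec{q}\in[0,1]^{|\RR|}$, so it is feasible for $\text{MLP}(\tfrac{1}{k})$. From $\mvec{\pi}\ge\tfrac{1}{k}M_{\mvec{0}}\mvec{q}$ componentwise, summing yields $\mvec{1}^\top\mvec{\pi}\ge\tfrac{1}{k}\mvec{1}^\top M_{\mvec{0}}\mvec{q}$, so the $\text{MLP}(\tfrac{1}{k})$-objective at $(\mvec{\xi},\mvec{q})$ is at most the $\text{MLP}_{\text{exact}}$-objective at $(\mvec{\xi},\mvec{\pi},\mvec{q})$. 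Taking the infimum over all feasible solutions gives the desired inequality.

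For the reverse direction, given any feasible $(\mvec{\xi},\mvec{q})$ of $\text{MLP}(\tfrac{1}{k})$, I would lift it by defining $\mvec{\pi}:=\tfrac{1}{k}M_{\mvec{0}}\mvec{q}$. Nonnegativity of $\mvec{\pi}$ is immediate. Since every entry of $M_{\mvec{0}}$ lies in $\{0,1\}$, for each zero entry $(i,j)$ of $X$ one has $(M_{\mvec{0}}\mvec{q})_{ij}\le\mvec{1}^\top\mvec{q}\le k$ by constraint \eqref{k_rectangles}, so $\pi_{ij}\in[0,1]$. The constraint $M_{\mvec{0}}\mvec{q}\le k\mvec{\pi}$ holds with equality by construction, and the remaining $\text{MLP}_{\text{exact}}$-constraints are inherited verbatim from $\text{MLP}(\tfrac{1}{k})$. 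The lifted objective value equals $\mvec{1}^\top\mvec{\xi}+\tfrac{1}{k}\mvec{1}^\top M_{\mvec{0}}\mvec{q}$, which is precisely the $\text{MLP}(\tfrac{1}{k})$-objective at $(\mvec{\xi},\mvec{q})$, yielding $\zeta_{\text{MLP}_{\text{exact}}}\le\zeta_{\text{MLP}(\frac{1}{k})}$.

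The only subtle point, and the one part of the argument that truly uses the structure of the problem rather than pure substitution, is verifying the box constraint $\mvec{\pi}\le\mvec{1}$ in the lifting step; this is exactly where the rank bound $\mvec{1}^\top\mvec{q}\le k$ of \eqref{k_rectangles} together with the binariness of $M_{\mvec{0}}$ enters and explains why the choice $\rho=\tfrac{1}{k}$ is the correct weight. Combining the two inequalities established above then gives the claimed equality $\zeta_{\text{MLP}_{\text{exact}}}=\zeta_{\text{MLP}(\frac{1}{k})}$.
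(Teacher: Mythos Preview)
Your proof is correct and follows essentially the same route as the paper: both arguments rest on the observation that, because $\mvec{\pi}$ appears only in the objective and in constraint \eqref{zeros}, its optimal value in $\text{MLP}_{\text{exact}}$ is $\tfrac{1}{k}M_{\mvec{0}}\mvec{q}$, which collapses the objective to that of $\text{MLP}(\tfrac{1}{k})$. Your version is in fact more careful than the paper's one-line sketch, since you explicitly verify the box constraint $\mvec{\pi}\le\mvec{1}$ via $\mvec{1}^\top\mvec{q}\le k$ and the binariness of $M_{\mvec{0}}$, a point the paper leaves implicit.
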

For a short proof of Lemma \ref{lemma_coincide} see Appendix \ref{subsection_coincide}.
Combining Lemmas \ref{lp_0_lemma_text}, \ref{lp_NOT_zero_text} and \ref{lemma_coincide} we obtain the following relations between formulations $\text{IP}_{\text{exact}}$, $\text{MIP}_{\text{exact}}$, MIP($\rho$) and their LP relaxations for $k>1$,
\begin{align}
&z_{\text{MIP}(\frac{1}{k})} \le \zeta_{IP}=\zeta_{\text{MIP}} \le z_{\text{MIP}(1)}, \\
&0=\zeta_{LP} \le z_{\text{MLP}(\frac{1}{k})} = \zeta_\text{MLP} \le z_{\text{MLP}(1)}.
\end{align}
Let $\mvec{p}$ be the dual variable vector associated to constraints \eqref{ones} and 
$\mu$ be the dual variable to constraint \eqref{k_rectangles}.
%
Then the dual of $\text{MLP}(\rho)$ is given by
\begin{align}
(\text{MDP}(\rho) )\; & z_{\text{MDP}(\rho)} =\max \mvec{1}^\top \mvec{p}  - k \mu \\
\text{s.t. }
& M_{\mvec{1}}^\top \mvec{p} -\mu \mvec{1} \le   \rho\, M_{\mvec{0}}^\top\mvec{1}, \\
&\mu \ge 0,\; \mvec{p} \in [0,1]^{|E|}.
\end{align}
Due to the number of variables in the formulation, it is not practical to solve $\text{MIP}(\rho)$ or 
its LP relaxation $\text{MLP}(\rho)$ explicitly. \textit{Column generation} (CG) is a technique to solve large LPs by iteratively generating only the variables which have the potential to improve the objective function \cite{Nemhauser:1998}. 
The CG procedure is initialised by explicitly solving  a Restricted Master LP which has a small subset of the variables in MLP($\rho$). 
The next step is to identify a missing variable with a \textit{negative reduced cost} to be added to this Restricted MLP($\rho$). To  avoid explicitly considering all missing variables, a \textit{pricing problem} is formulated and 
solved. The solution of the pricing problem either returns a variable with negative reduced cost and the procedure is iterated; or proves that no such variable exists and hence the solution of the Restricted MLP($\rho$) is optimal for the complete formulation MLP($\rho$).

We use CG to solve MLP($\rho$) by considering a sequence $(t = 1,2,...)$ of  Restricted MLP($\rho$)'s with constraint matrix $M^{(t)}$ being a subset of columns of $M$, where each column $\mvec{y}\in \B^{nm}$ of $M$ corresponds to a flattened rank-$1$ binary matrix $\mvec{a}\mvec{b}^\top$ according to Equation \eqref{partition}.
The constraint matrix of the first Restricted MLP($\rho$) may be left empty or can be warm started by identifying a few rank-$1$ matrices in $\RR$, say from a heuristic solution.
Upon successful solution of the $t$-th Restricted MLP($\rho$), we obtain a vector of dual variables $[\mvec{p}^*,\mu^*]\ge \mvec{0}$ optimal for the $t$-th Restricted MLP($\rho$). 
To identify a missing column of $M$ that has a negative reduced cost, we solve the following pricing problem (PP): 
\begin{align*}
(\text{PP}) & \;\omega( \mvec{p}^*)=\max_{a,b,y}  \sum_{(i,j)\in E} p_{ij}^{*} y_{ij} -  \rho \sum_{(i,j)\not\in E} y_{ij} &\\
\text{s.t. }
&y_{ij} \in MC(a_i, b_j),\;
a_{i},b_{j}\in\B,\;i\in [n], j\in [m].
\end{align*}
The objective of PP depends on the current dual solution $[\mvec{p}^*,\mu^*]$ 
and its optimal solution corresponds to a rank-$1$ binary matrix $\mvec{a}\mvec{b}^\top$ whose corresponding variable $q_\ell$ in MLP($\rho$) has the smallest reduced cost. 
If $\omega( \mvec{p}^*)\le\mu^*$, then the dual variables $[\mvec{p}^*, \mu^*]$ of the Restricted MLP($\rho$) are feasible for MDP($\rho$) and hence the current solution of the Restricted MLP($\rho$) is optimal for the full formulation MLP($\rho$).
If $\omega( \mvec{p}^*)> \mu^*$, then the variable $q_\ell$ associated with the rank-$1$ binary matrix $\mvec{a}\mvec{b}^\top$ is added to  the  Restricted MLP($\rho$) 
and the procedure is iterated.
CG optimally terminates if at some iteration we have $\omega( \mvec{p}^*)\le  \mu^*$.

{To apply the CG approach above 
	to $\text{MLP}_{\text{exact}}$ only a small modification needs to be made.} 
The Restricted $\text{MLP}_{\text{exact}}$ provides dual variables for constraints \eqref{zeros} which are used in the objective of PP for coefficients of $y_{ij}$ $(i,j)\not \in E$. 
Note however, that CG cannot be used to solve a modification of $\text{MLP}_{\text{exact}}$ in which constraints \eqref{zeros} are replaced by exponentially many constraints $(M_{\mvec{0}})_{\ell} \; q_\ell \le \mvec{\pi}$ for $\ell\in[|\RR|]$ where $(M_{\mvec{0}})_{\ell}$ denotes the $\ell$-th column of $M_{\mvec{0}}$, see Appendix \ref{strong_lp_sec}.

{If the optimal solution of MLP($\rho$) is integral, then it is also optimal for $\text{MIP}(\rho)$. However, if it is fractional, then it only provides a lower bound on the optimal value of $\text{MIP}(\rho)$.
	In this case we obtain an integer feasible solution by simply adding integrality constraints on the  variables of the final Restricted MLP($\rho$) and solving it as an integer program. 
	If $\rho=1$, the optimal solution of this integer program is optimal for $\text{MIP}(1)$ if the objective of the Restricted $\text{MIP}(1)$ and the ceiling of the Restricted MLP(1) agree.
}
To solve the MIP($\rho$) to optimality in all cases, one needs to embed CG into branch-and-bound 
which we do not do. However, note that even if the CG procedure is terminated prematurely, one can still obtain a lower bound on MLP($\rho$) and $\text{MIP}(\rho)$ as follows. Let the objective value of any of the Restricted MLP($\rho$)'s be
\begin{equation}
z_{\text{RMLP}}=\mvec{1}^\top \mvec{\xi}^* + \rho \mvec{1}^\top M^{*}_0\mvec{q}^* = \mvec{1}^\top \mvec{p}^* - k \mu^*
\end{equation}
where 
$[\mvec{\xi}^*, \mvec{q}^*]$ is the solution of the Restricted MLP($\rho$), $[\mvec{p}^*, \mu^*]$ is the solution of the dual of the Restricted MLP($\rho$) and $\mvec{1}^\top M^*_0$ is the objective coefficient of columns in the Restricted MLP($\rho$). Assume that we solve PP to optimality and we obtain a column $\mvec{y}$ for which the reduced cost is negative, $\omega( \mvec{p}^*)> \mu^* $. In this case, we can construct a feasible solution to MDP($\rho$) by setting $\mvec{p}:=\mvec{p}^*$ and $\mu := \omega( \mvec{p}^*)$ and get the following bound on the optimal value $z_{\text{MLP}(\rho)}$ of MLP($\rho$),
\begin{equation}
\label{guarantee}
z_{\text{MLP}(\rho)} \ge \mvec{1}^\top \mvec{p}^* - k\, \omega( \mvec{p}^*)= z_{\text{RMLP}} - k (\omega( \mvec{p}^*)- \mu^*).
\end{equation}
If we do not have the optimal solution to PP but have an upper bound $\bar \omega( \mvec{p}^*)$ on it, $\omega( \mvec{p}^*)$ can be replaced by $\bar \omega( \mvec{p}^*)$ in equation \eqref{guarantee} and the bound on MLP($\rho$) still holds. Furthermore, this lower bound on MLP($\rho$) provides a valid lower bound on $\text{MIP}(\rho)$. Consequently, our approach always produces a bound on the optimality gap of the final solution which heuristic methods cannot do. We have, however, no a priory (theoretical) bound on this gap.

\subsection{The Pricing Problem}

The efficiency of the CG procedure described above greatly depends on solving PP  efficiently. In standard form PP can be written as a bipartite binary quadratic program (BBQP) 
\begin{equation}
\label{BBQ}
(\text{PP}) \quad \omega(\mvec{p}^*) = \max_{\mvec{a}\in \B^n, \mvec{b} \in \B^m} \mvec{a} ^\top H \mvec{b}
\end{equation}
for $H$ an $n\times m$ matrix with $h_{ij} = p^*_{ij}\in [0,1]$ for $(i,j)\in E$ and $h_{ij} = -\rho$ for $(i,j)\not \in E$. BBQP is NP-hard in general as it includes the maximum edge biclique problem \cite{Peeters:2003}, hence for large $X$ it may take too long to solve PP to optimality at each iteration. 
To speed up computations, the IP formulation of PP  
may be improved by eliminating redundant constraints. The McCormick envelopes 
set two lower and two upper bounds on $y_{ij}$. Due to the objective function it is possible to declare the lower (upper) bounds  $y_{ij}$ for only $(i,j)\not \in E$ ($(i,j)\in E$) without changing the optimum. 



If a heuristic approach to PP  provides a solution with negative reduced cost, 
then it is valid to add this heuristic solution as a column to the next Restricted MLP($\rho$).
Most heuristic algorithms that are available for BBQP build on the idea that the optimal $\mvec{a}\in \B^n$ with respect to  a fixed $\mvec{b}\in \B^m$ can be computed in $\mathcal{O}(nm)$ time and this procedure can be iterated by alternatively fixing $\mvec{a}$ and $\mvec{b}$. 
\cite{Punnen:2012} present several local search heuristics for BBQP along with a simple greedy algorithm.
Below we detail this greedy algorithm 
and introduce some variants of it which we use in the next section to provide a warm start to PP at every iteration of the CG procedure. 
%


%

The greedy algorithm of \cite{Punnen:2012} aims to set entries of $\mvec{a}$ and $\mvec{b}$ to $1$ which correspond to rows and columns of $H$ with the largest positive weights. 
In the first phase  of the algorithm, the row indices $i$ of $H$ are put in decreasing order according to their sum of positive entries, so $\gamma^+_{i} \ge \gamma^+_{i+1}$ where $ \gamma^+_i:=\sum_{j=1}^m \max(0,h_{ij})$.  
Then sequentially according to this ordering, $a_i$ is set to $1$ if $\sum_{j=1}^m \max ( 0, \sum_{\ell=1}^{i-1}  a_\ell h_{\ell j} ) < \sum_{j=1}^m \max ( 0, \sum_{\ell=1}^{i-1} a_\ell h_{\ell j} + h_{ij})$ and $0$ otherwise. In the second phase, $b_j$ is set to $1$ if $(\mvec{a}^\top H)_j >0$, $0$ otherwise. The precise outline of the algorithm is given in Appendix \ref{greedy_punnen}.


%


There are many variants of the greedy algorithm one can explore.
First, the solution greatly depends on the ordering of $i$'s in the first phase. If for some $i_1\not = i_2$ we have $\gamma^{+}_{i_1} = \gamma^{+}_{i_2}$, comparing the sum of negative entries of rows $i_1$ and $i_2$ can put more ``influential'' rows of $H$ ahead in the ordering. Let us call this ordering the \textit{revised ordering} and the one which only compares the positive sums as the \textit{original ordering}.
Another option is to use a completely \textit{random order} of $i$'s or to apply a small perturbation to sums $\gamma^+_i$ to get a \textit{perturbed} version of the revised or original ordering. None of the above ordering strategies clearly dominates the others in all cases but they are fast to compute hence one can evaluate all five ordering strategies (original, revised, original perturbed, revised perturbed, random) and pick the best one.
Second, the algorithm as presented above first fixes $\mvec{a}$ and then $\mvec{b}$. Changing the order of fixing $\mvec{a}$ and $\mvec{b}$ can yield a different result hence it is best to try for both $H$ and $H^\top$. In general, it is recommended to start the first phase on the smaller dimension. 
Third, the solution from the greedy algorithm may be improved by computing the optimal $\mvec{a}$ with respect to fixed $\mvec{b}$. This idea then can be used to fix $\mvec{a}$ and $\mvec{b}$ in an alternating fashion and stop when no changes occur in either.


\section{Experiments}
\label{experiments}

The CG approach introduced in the previous section provides a theoretical framework for computing $k$-BMF with optimality guarantees. In this section we present some experimental results with CG to demonstrate the practical applicability of our approach on eight real world categorical datasets that were downloaded from online repositories \cite{UCI}, \cite{BOOKS}.
Table \ref{dimensions} shows a short summary of the eight datasets used, details on converting categorical columns into binary and missing value treatment can be found in Appendix \ref{data}. Table \ref{dimensions} also shows the value of the isolation number $i(X)$
for each dataset, which provides a lower bound on the Boolean rank \cite{Monson:1995}.

\begin{table}[h]
	\centering
	\setlength{\tabcolsep}{3pt}
	\begin{tabular}{lrrrrrrrr}
		\toprule
		&zoo & tumor 
		& hepat
		&heart& lymp & audio& apb& votes\\
		\midrule
		n     & 101   & 339 & 155    & 242   & 148   & 226   & 105   & 434 \\
		m     & 17    & 24     & 38  & 22    & 44    & 94    & 105   & 32 \\
		$i(X)$ & 	16& 24   & 30& 22  & 42  & 90 & 104 &  $30$\\
		\%1s  & 44.3  & 24.3 & 47.2  & 34.4 & 29.0  & 11.3  & 8.0  & 47.3 \\
		\bottomrule
	\end{tabular} %
	\caption{\label{dimensions}Summary of binary real world datasets}
\end{table}

\begin{table}[b!]
	\centering
	\setlength{\tabcolsep}{3.5pt}
	\begin{tabular}{lrrrrrr}
		\toprule
		&    & \multicolumn{1}{c}{zoo} &       &       & \multicolumn{1}{c}{tumor} &  \\
		k     & \multicolumn{1}{c}{MIP(1)} & KGH17 & LVA08 & \multicolumn{1}{c}{MIP(1)} & KGH17 & LVA08 \\
		\midrule
		2     & 0.0   & 0.0   & 100.0 & 0.9   & 40.8  & * \\
		5     & 0.0   & 59.2  & 100.0 & 9.3   & 98.0  & * \\
		10    & 3.0   & 95.8 & 100.0 & 28.4  & 100.0 & * \\
		\bottomrule
	\end{tabular}%
	\caption{\label{gap}\% optimality gap after 20 mins under objective \eqref{app_obj} }
\end{table}%

\begin{table*}[t]
	\centering
	\setlength{\tabcolsep}{3pt} 
	\begin{tabular}{cl l l l l l l l l }
		\toprule
		$k$  &       &\multicolumn{1}{l}{zoo} & \multicolumn{1}{l}{tumor} & \multicolumn{1}{l}{hepat} & \multicolumn{1}{l}{heart} & \multicolumn{1}{l}{lymp} & \multicolumn{1}{l}{audio} & \multicolumn{1}{l}{apb} & \multicolumn{1}{l}{votes} \\
		\midrule
		\multirow{3}[2]{*}{2} & MLP($\frac{1}{k}$) & 206.5 & 1178.9 & 978.7 & 882.9 & 917.2 & 1256.5 & 709   & 1953 \\
		& MLP(1) & 272   & 1409.8 & 1384  & 1185  & 1188.8 & 1499  & 776   & 2926 \\
		& MIP(1) & 272(271)   & 1411  & 1384(1382) & 1185  & 1197(1184) & 1499  & 776   & 2926 \\
		\midrule
		\multirow{3}[2]{*}{5} & MLP($\frac{1}{k}$) & 42.8 & 463.9 & 333.1 & 291.0 & 366.7 & 654.2 & 433.5 & 715.5 \\
		& MLP(1) & 127   & 1019.3 & 1041.1 & 736   & 914.0 & 1159.3 & 683.0 & 2135.5 \\
		& MIP(1) & 127(125) & 1029  & 1228  & 736   & 997(991) & 1176  & 684(683) & 2277(2274) \\
		\midrule
		\multirow{3}[2]{*}{10} & MLP($\frac{1}{k}$) & 4.8  & 192.8 & 142.5 & 102.3 & 165.1 & 351.4 & 166.8 & 307.9 \\
		& MLP(1) & 38.8 & 575.5 & 734.8 & 419   & 653.2 & 867.2 & 574.2 & 1409.5 \\
		& MIP(1) & 40    & 579   & 910   & 419   & 737(732) & 893   & 577(572) & 1566(1549) \\
		\bottomrule
	\end{tabular}%
	\caption{\label{formulations} Primal objective values of MLP(1), MLP($\frac{1}{k}$), MIP(1) after 20 mins of CG}
\end{table*}%

\begin{table*}[t]
	\centering
	\begin{tabular}{clrrrrrrrr}
		\toprule
		&       & \multicolumn{1}{l}{zoo} & \multicolumn{1}{l}{tumor} & \multicolumn{1}{l}{hepat} & \multicolumn{1}{l}{heart} & \multicolumn{1}{l}{lymp} & \multicolumn{1}{l}{audio} & \multicolumn{1}{l}{apb} & \multicolumn{1}{l}{votes} \\
		\midrule
		\multirow{8}[2]{*}{k=2} & CG    & \textbf{271} & 1411  & \textbf{1382} & \textbf{1185} & 1184  & \textbf{1499} & \textbf{776} & \textbf{2926} \\
		& $\text{IP}_\text{exact}$ & \textbf{271} & \textbf{1408} & 1391  & 1187  & \textbf{1180} & \textbf{1499} & \textbf{776} & \textbf{2926} \\
		& ASSO++ & 276   & 1437  & 1397  & 1187  & 1202  & 1503  & \textbf{776} & \textbf{2926} \\
		& $k$-greedy & 325   & 1422  & 1483  & 1204  & 1201  & \textbf{1499} & \textbf{776} & 2929 \\
		& pymf  & 276   & 1472  & 1418  & 1241  & 1228  & 1510  & 794   & 2975 \\
		& ASSO  & 367   & 1465  & 1724  & 1251  & 1352  & 1505  & 778   & 2946 \\
		& NMF   & 291   & 1626  & 1596  & 1254  & 1366  & 2253  & 809   & 3069 \\
		& MEBF & 348 &	1487	&1599	&1289	&1401	&1779&	812	&3268\\
		\midrule
		\multirow{8}[2]{*}{k=5} & CG    & \textbf{125} & \textbf{1029} & \textbf{1228} & \textbf{736} & \textbf{991} & \textbf{1176} & \textbf{683} & \textbf{2272} \\
		&$\text{IP}_\text{exact}$& 133   & 1055  & \textbf{1228} & 738   & 1029  & 1211  & 690   & 2293 \\
		& ASSO++ & 133   & 1055  & \textbf{1228} & 738   & 1039  & 1211  & 694   & 2302 \\
		& $k$-greedy & 233   & 1055  & 1306  & 748   & 1063  & 1211  & 690   & 2310 \\
		& pymf  & 142   & 1126  & 1301  & 835   & 1062  & 1245  & 730   & 2517 \\
		& ASSO  & 354   & 1092  & 1724  & 887   & 1352  & 1505  & 719   & 2503 \\
		& NMF   & 163   & 1207  & 1337  & 995   & 1158  & 1565  & 762   & 2526 \\
		& MEBF & 173 &	1245	&1439	&929&	1245	&1672	&730&	2832\\
		\midrule
		\multirow{8}[2]{*}{k=10} & CG    & \textbf{40} & \textbf{579} & 910   & \textbf{419} & \textbf{730} & \textbf{893} & \textbf{572} & \textbf{1527} \\
		& $\text{IP}_\text{exact}$ & 41    & 583   & \textbf{902} & \textbf{419} & 805   & 919   & 590   & 1573 \\
		& ASSO++ & 55    & 583   & 910   & \textbf{419} & 812   & 922   & 591   & 1573 \\
		& $k$-greedy & 184   & 675   & 1088  & 565   & 819   & 976   & 611   & 1897 \\
		& pymf  & 96    & 703   & 1186  & 581   & 987   & 1106  & 602   & 2389 \\
		& ASSO  & 354   & 587   & 1724  & 694   & 1352  & 1505  & 661   & 2503 \\
		& NMF   & 153   & 826   & 1337  & 995   & 1143  & 1407  & 689   & 2481 \\
		& MEBF & 122 &	990	 &1328	&777&	1004	&1450	&662&	2460\\
		\bottomrule
	\end{tabular}%
	\caption{\label{COMP}Factorisation errors in $\|\cdot\|_F^2$ for eight methods for $k$-BMF}
\end{table*}%

Since the efficiency of CG greatly depends on the speed of generating columns, let us illustrate the speed-up gained by using heuristic pricing. 
At each iteration of CG, 30 variants of the greedy heuristic are computed to obtain an initial feasible solution to PP. The 30 variants of the greedy algorithm use the original and revised ordering, their transpose and perturbed version and 22 random orderings.
All greedy solutions are improved by the alternating heuristic until no further improvement is found.
Under \textit{exact} pricing, the best heuristic solution is used as a warm start and PP is solved to optimality at each iteration using \cite{CPLEXmanual}.
In simple heuristic (\textit{heur}) pricing, if the best heuristic solution to PP has negative reduced cost, $\omega_{\text{heur}}(\mvec{p}^*)>\mu^*$, then the heuristic column is added to the next Restricted MLP($\rho$). If at some iteration, the best heuristic column does not have negative reduced cost, CPLEX is used to solve PP to optimality for that iteration.
The multiple heuristic (\textit{heur\_multi}) pricing is a slight modification of the simple heuristic strategy, in which at each iteration all columns with negative reduced cost are added to the next Restricted MLP($\rho$). 

\begin{figure}[b!]
	\centering
	\includegraphics[scale=0.8]{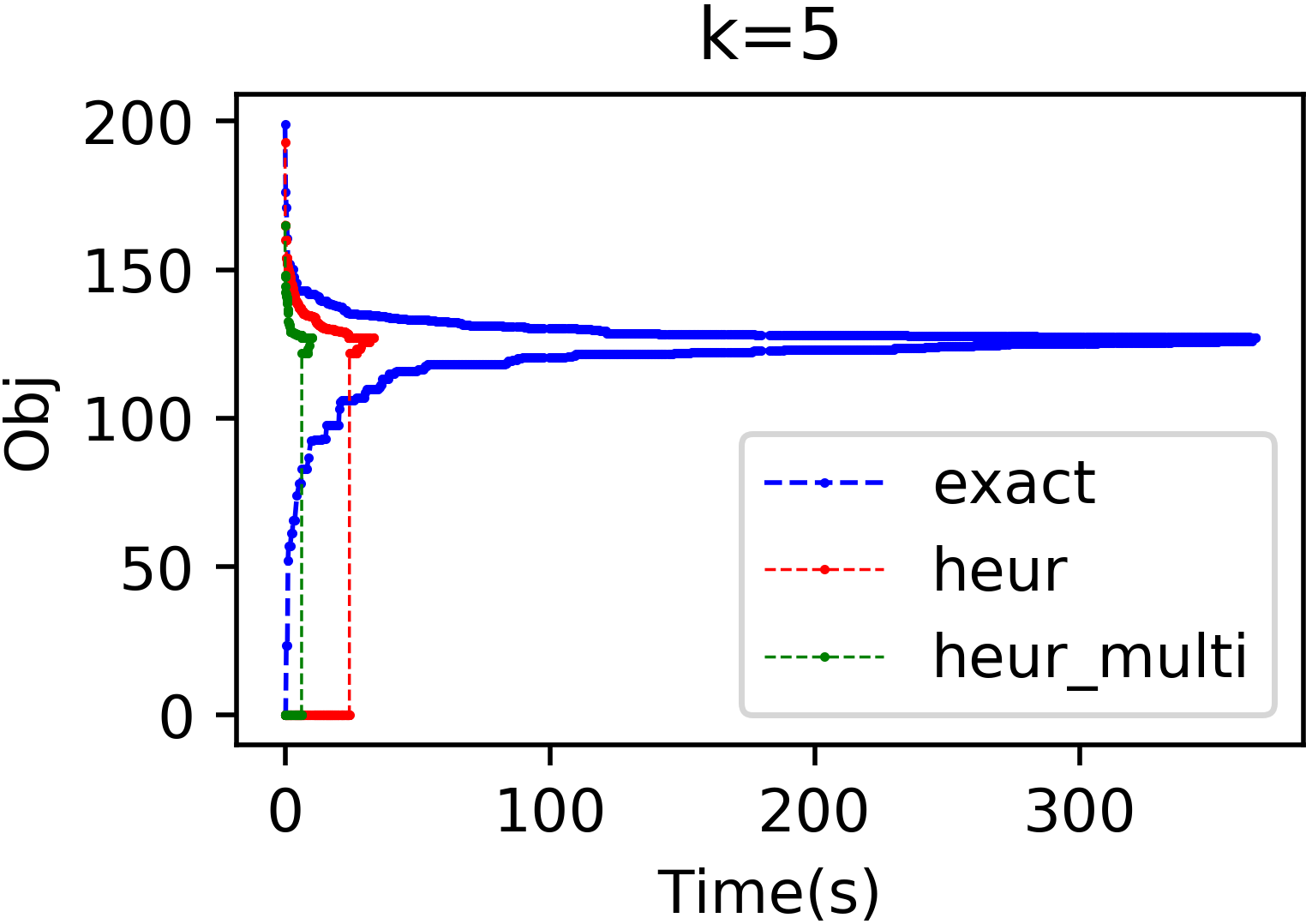}\\
	\includegraphics[scale=0.8]{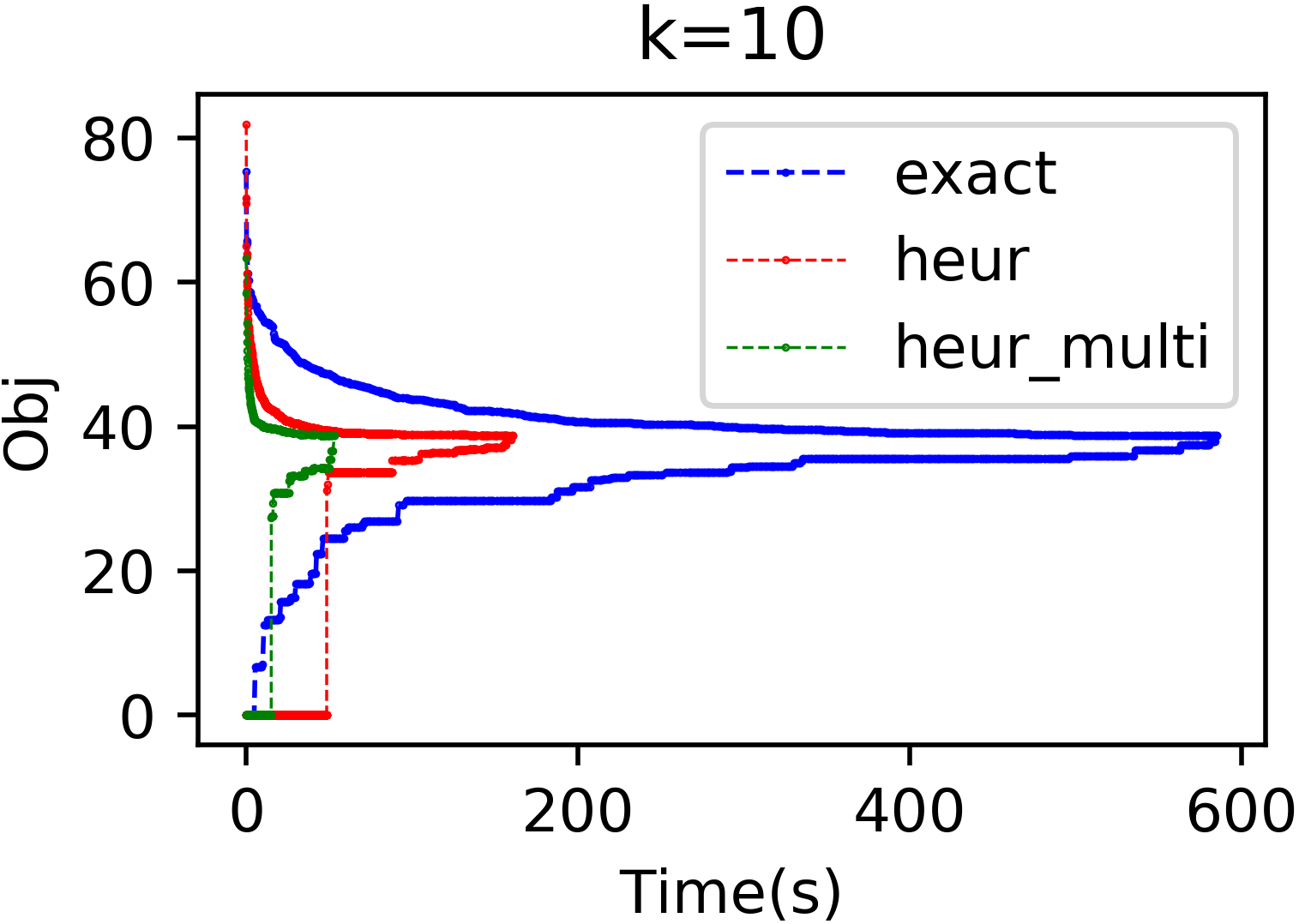}
	\caption{\label{zoo_pricing}Comparison of pricing strategies for solving $\text{MLP}(1)$ on the zoo dataset }
\end{figure}
Figure \ref{zoo_pricing} indicates the differences between pricing strategies when solving MLP(1) via CG for $k=5,10$ on the zoo dataset. The primal objective value of MLP(1) (decreasing curve) and the value of the dual bound (increasing curve) computed using the formula in equation \eqref{guarantee} are plotted against time. 
Sharp increases in the dual bound for heuristic pricing strategies correspond to iterations in which CPLEX was used to solve PP, as for the evaluation of the dual bound on MLP(1) we need a strong upper bound on $\omega(\mvec{p}^*)$ which heuristic solutions do not provide. 
While we observe a tailing off effect \cite{Lubbecke:2005} on all three curves, both heuristic pricing strategies provide a significant speed-up from exact pricing, with adding multiple columns at each iteration being the fastest.

In Table \ref{gap} we present computational results comparing the optimality gap ($100\times\tfrac{\text{best integer}-\text{best bound}}{\text{best integer}}$) of MIP(1), the compact formulation of \cite{Kovacs:2017} (\textit{KGH17}) and the exponential formulation of \cite{Lu:2008:OBM:1546682.1547186} (\textit{LVA08}) under objective \eqref{app_obj} using a 20 mins time budget.
See Appendix \ref{app_table_2_formulations} for the precise statement of formulations KGH17 and LVA08  under objective \eqref{app_obj}.
Reading in the full exponential size model LVA08  using 16 GB memory is not possible for datasets other  than zoo.
Table \ref{gap} shows that different formulations and algorithms to solve them make a difference in practice: our novel exponential formulation MIP(1) combined with an effective computational optimization approach (CG) produces solutions with smaller optimality gap than the compact formulation as it scales better and it has a stronger LP relaxation.

In order for CG to terminate with a certificate of optimality, at least one pricing problem has to be solved to optimality. Unfortunately for the larger datasets this cannot be achieved in under 20 mins. Therefore, for datasets other than zoo, we change the multiple heuristic pricing strategy as follows: 
We impose an overall time limit of 20 mins on the CG process and use the barrier method in CPLEX as the LP solver for the Restricted MLP($\rho$) at each iteration. 
In order to maximise the diversity of columns added at each iteration, we choose at most two columns with negative reduced cost that are closest to being mutually orthogonal. 
If CPLEX has to be used to improve the heuristic pricing solution, we do not solve PP to optimality 
but abort CPLEX if a column with negative reduced cost has been found. While these modifications result in a speed-up, they reduce the chance of obtaining a strong dual bound. In case a strong dual bound is desired, 
we may continue applying CG iterations with exact pricing after the 20 mins of heuristic pricing have run their 
course.



In our next experiment, we explore the differences between formulations MLP($\frac{1}{k}$), MLP(1) and MIP(1). We warm start CG by identifying a few heuristic columns using the code of \cite{Barahona:2019} and a new fast heuristic ($k$-\textit{greedy}) which sequentially computes $k$ rank-1 binary matrices via the greedy algorithm for BBQP starting with the coefficient matrix $H=2X-1$ and then setting entries of $H$ to zero that correspond to entries of $X$ that are covered. For the precise outline of $k$-greedy, see Appendix \ref{rank-k greedy}.

Table \ref{formulations} shows the primal objective values of MLP(1) and MLP($\frac{1}{k}$) with heuristic pricing using a time limit of 20 mins, and the objective value of MIP(1) solved on the columns generated by MLP(1). If the error measured in $\|\cdot\|^2_F$ differs from the objective of MIP(1), the former is shown in parenthesis. It is interesting to observe that MLP(1) has a tendency to produce near integral solutions and that the objective value of MIP(1) often coincides with the error measured in $\|\cdot\|^2_F$.  We note that once a master LP formulation is used to generate columns, any of the MIP models could be used to obtain an integer feasible solution. In experiments, we found that formulation MIP($\rho$) is solved much faster than $\text{MIP}_{\text{exact}}$ and that setting $\rho$ to $1$ or $0.95$ provides the best integer solutions.


We compare the CG approach against the most widely used $k$-BMF heuristics and the exact model $\text{IP}_{\text{exact}}$.
The heuristic algorithms we evaluate include 
the ASSO algorithm \cite{Miettinen:2006:PKDD, Miettinen:2008:DBP:1442800.1442809}, 
the alternating iterative local search algorithm (ASSO++) of \cite{Barahona:2019}  which uses ASSO as a starting point, 
algorithm $k$-greedy detailed in Appendix \ref{rank-k greedy}, 
the penalty objective formulation (pymf) of \cite{Zhang:2007} via the implementation of \cite{Schinnerl:2017}
and the permutation-based heuristic (MEBF) \cite{Wan:2020}.
We also evaluate $\text{IP}_{\text{exact}}$ with a time limit of $20$ mins and provide the heuristic solutions of ASSO++ and $k$-greedy as a warm start to it. 
In addition, we compute rank-$k$ NMF and binarise it with a threshold of $0.5$. 
The exact details and parameters used in the computations can be found in Appendix \ref{app_comp}.\footnote{Appendix is available at \texttt{arxiv.org/abs/2011.04457}.} 
Our CG approach (CG) results are obtained by generating columns for 20 mins using formulation MLP(1) with a warm start of initial columns obtained from  ASSO++ and $k$-greedy, then solving MIP($\rho$) for $\rho$ set to $1$ and $0.95$ over the generated columns and picking the best.
Table \ref{COMP} shows the factorisation error in $\|\cdot\|_F^2$ after evaluating the above described methods on all datasets for $k=2,5,10$. The best result for each instance is indicated in boldface. We observe that CG provides the strictly smallest error for 15 out of 24 cases.

\section{Conclusion}

In this paper, we studied the rank-$k$ binary matrix factorisation problem under Boolean arithmetic. We introduced a new integer programming formulation and detailed a method using column generation for its solution. Our experiments indicate that our method using 20 mins time budget is producing more accurate solutions than most heuristics available in the literature and is able to prove optimality for smaller datasets.
In certain critical applications such as medicine, spending 20 minutes to obtain a higher accuracy factorisation  with a bound on the optimality gap 
can be easily justified.
In addition, solving BMF to near optimality via our proposed method paves the way to more robustly benchmark  heuristics for $k$-BMF.
Future directions that could be explored are related to designing more accurate heuristics and faster exact algorithms for the pricing problem. In addition, a full branch-and-price algorithm implementation would be beneficial once the pricing problems are solved more efficiently.

\section*{Acknowledgements}
During the completion of this work R.\'A.K was supported by The Alan Turing Institute and The Office for National Statistics.

\bibliography{refs}

\section*{Appendix}

\subsection{The strength of the LP relaxation of $\text{IP}_\text{exact}$}
\label{lp_0}

\setcounter{theorem}{0}
\begin{lemma}
	\label{lp_0_lemma}
	For $k>1$, the LP relaxation of $\text{IP}_\text{exact}$ ($\text{LP}_\text{exact}$) has optimal objective value $0$ which is attained by at least $\binom{k}{2}$ solutions.
\end{lemma}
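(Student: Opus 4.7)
The plan is to exhibit an explicit family of $\binom{k}{2}$ fractional feasible points of $\text{LP}_\text{exact}$, one for each pair $\ell_1 < \ell_2$ in $[k]$, each attaining objective value $0$. Since the objective $\sum_{(i,j)\in E}(1-z_{ij})+\sum_{(i,j)\notin E} z_{ij}$ is nonnegative on any LP-feasible point (because $0\le y_{i\ell j}\le z_{ij}\le 1$ from the McCormick bounds and \eqref{sum}), any feasible point achieving $0$ is optimal, and distinctness of the constructed points gives the multiplicity count.

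For a fixed pair $\ell_1<\ell_2$, I would set $a_{i\ell_1}=a_{i\ell_2}=\tfrac12$ for every $i\in[n]$, $b_{\ell_1 j}=b_{\ell_2 j}=\tfrac12$ for every $j\in[m]$, and $a_{i\ell}=b_{\ell j}=0$ for all remaining $\ell\notin\{\ell_1,\ell_2\}$. Then I would define $z_{ij}$ and $y_{i\ell j}$ to match the desired zero objective: $z_{ij}=1$ and $y_{i\ell_1 j}=y_{i\ell_2 j}=\tfrac12$ whenever $(i,j)\in E$; $z_{ij}=0$ and $y_{i\ell_1 j}=y_{i\ell_2 j}=0$ whenever $(i,j)\notin E$; and $y_{i\ell j}=0$ for all $\ell\notin\{\ell_1,\ell_2\}$.

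I would then verify feasibility by a routine case check. The McCormick inequalities for $\ell\in\{\ell_1,\ell_2\}$ reduce to $a_{i\ell}+b_{\ell j}-1 = 0 \le y_{i\ell j}$, $y_{i\ell j}\le \tfrac12$, and $y_{i\ell j}\ge0$, all satisfied by both $y_{i\ell j}=\tfrac12$ (positive entries) and $y_{i\ell j}=0$ (zero entries); for $\ell\notin\{\ell_1,\ell_2\}$ both $a$ and $b$ vanish, forcing $y_{i\ell j}=0$ as assigned. Constraint \eqref{sum} reads $y_{i\ell j}\le z_{ij}\le\sum_\ell y_{i\ell j}$; on entries of $E$ this is $\tfrac12\le 1\le 1$, and on entries outside $E$ it is $0\le 0\le 0$. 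The bound $z_{ij}\le1$ is clear, and the objective evaluates to $0$ by construction.

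The only remaining point is distinctness: the support of the $a$-columns (equivalently the $b$-rows) is exactly $\{\ell_1,\ell_2\}$, so different pairs yield different solutions, giving the claimed $\binom{k}{2}$ optima. There is no serious obstacle here; the only subtlety is being careful that the lower McCormick bound $a+b-1\le y$ is satisfied at equality at $\tfrac12$, which is precisely why the value $\tfrac12$ is chosen for every $a$ and $b$ entry in the support.
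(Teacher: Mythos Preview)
Your proof is correct and follows essentially the same idea as the paper's: set $a=b=\tfrac12$ so that the McCormick envelope becomes $[0,\tfrac12]$, and then use exactly two layers with $y=\tfrac12$ to push $z_{ij}$ up to $1$ on $E$ while keeping $z_{ij}=0$ off $E$. The only cosmetic difference is that the paper keeps $a_{i\ell}=b_{\ell j}=\tfrac12$ for \emph{all} $\ell\in[k]$ and obtains the $\binom{k}{2}$ optima by varying which pair of $y$-layers is set to $\tfrac12$ on $E$, whereas you zero out the unused $a$- and $b$-layers and distinguish the solutions by the support pair $\{\ell_1,\ell_2\}$ itself.
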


\begin{proof} Observe that the objective function of $\text{LP}_\text{exact}$ satisfies $0\le \sum_{(i,j) \in E} (1-z_{ij}) +\sum_{(i,j) \not \in E} z_{ij}$ as constraints \eqref{sum},  the McCormick envelopes and $a_{i\ell }, b_{\ell j}\in[0,1]$  imply $z_{ij}\in[0,1]$. Let us construct a feasible solution to  $\text{LP}_\text{exact}$ which attains this bound. Let $a_{i\ell} = b_{\ell j} = \frac{1}{2}$ for all $i\in[n], j\in [m], \ell\in[k]$. The McCormick envelopes then are equivalent to 
	$ MC(\frac{1}{2}, \frac{1}{2}) = \{y \in \R: \frac{1}{2}+\frac{1}{2} -1 \le y, y\le \frac{1}{2}, y\le \frac{1}{2}, 0\le y \} = [0,\frac{1}{2}]$
	hence we may choose the value of $y_{i\ell j} \in MC(\frac{1}{2}, \frac{1}{2})$ depending on the objective coefficient of indices $(i,j)$. 
	For $(i,j)\in E$ the objective function is maximising $z_{ij}$ hence we set $y_{i\ell j}=\frac{1}{2}$ so that the upper bound $ \sum_{\ell=1}^k y_{i\ell j}$  on $z_{ij}$ becomes greater than equal to $1$ and $z_{ij}$ can take value $1$.
	For $(i,j)\not \in E$ the objective function is minimising $z_{ij}$ hence we set $y_{i\ell j}=0$ so that the lower bounds $ y_{i\ell j}\le z_{ij}$ evaluate to $0$ and $z_{ij}$ can take value $0$. Therefore,
	the following setting of the variables shows the lower bound of $0$ on the objective function is attained,
	\begin{align*}
	a_{i\ell} &= \frac{1}{2},\; i\in[n],\ell \in[k];
	&b_{\ell j} &= \frac{1}{2},\;  \ell \in[k],  j\in[m];\\
	y_{i\ell j} &= \frac{1}{2}, (i,j)\in E, \ell \in[k];
	& y_{i\ell j} &= 0,(i,j)\not \in E, \ell \in[k];\\
	z_{ij} &= 1,\; (i,j)\in E;
	& z_{ij} &= 0 ,\; (i,j)\not\in E.
	\end{align*}
	Furthermore, for all $(i,j)\in E$ it is enough to set  $y_{i\ell_1 j}= y_{i\ell_2 j}=\frac{1}{2}$ for only two indices $\ell_1,\ell_2\in [k]$ since this already achieves the upper bound $z_{ij}\le 1 = \sum_{\ell=1}^k y_{i\ell j}$. Hence, there is at least $\binom{k}{2}$ different solutions of $\text{LP}_\text{exact}$ with objective value $0$.
\end{proof}

\subsection{The strength of the LP relaxation of $\text{MIP}_{\text{exact}}$}
\label{bound_on_lp_exact}
For our proof we will need a definition from the theory of binary matrices.
\begin{definition}\cite[Section 2.3]{Monson:1995}
	\label{isol_def}
	Let $X$ be a binary matrix. A set $S\subseteq E=\{ (i,j):x_{ij}=1  \}$ is said to be an \textit{isolated set of ones} if whenever $(i_1,j_1),(i_2,j_2)$ are two distinct members of $S$ then 
	\begin{enumerate}
		\item $i_1\not = i_2$, $j_1 \not =j_2$ and
		\item $x_{i_1,j_2} x_{i_2,j_1}=0$.
	\end{enumerate}
	The size of the largest cardinality isolated set of ones in $X$ is denoted by $i(X)$ and is called the \textit{isolation number} of $X$.
\end{definition}
Observe that requirement (1.) implies that an isolated set of ones can contain the index corresponding to at most one entry in each column and row of $X$. Hence $i(X)\le \min(n,m)$. 
Requirement (2.) implies that if $(i_1,j_1), (i_2,j_2)$ are members of an isolated set of ones then at least one of the entries $x_{i_1,j_2}$, $x_{i_2,j_1}$ is zero, hence members of an isolated set of ones cannot be contained in a common rank-$1$ submatrix of $X$. Therefore if the largest cardinality isolated set of ones has $i(X)$ elements, to cover all $1$s of $X$ we need at least $i(X)$ many rank-$1$ binary matrices in a factorisation of $X$, so $i(X)$ provides a lower bound on the Boolean rank of $X$.

\setcounter{theorem}{1}
\begin{lemma} 
	\label{non-zero-lp}
	Let $X$ be a binary matrix with isolation number $i(X)$. Then for rank-$k$ binary matrix factorisation of $X$ with $k<i(X)$, the LP-relaxation of $\text{MIP}_{\text{exact}}$ has non-zero optimal objective value.
\end{lemma}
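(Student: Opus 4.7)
The plan is to argue by contradiction: assume the LP relaxation attains objective value $0$, then derive from the isolation-number structure that $k \geq i(X)$, contradicting the hypothesis.

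First I would unpack what an objective value of $0$ forces. Since $\mvec{\xi} \geq \mvec{0}$ and $\mvec{\pi} \geq \mvec{0}$ in the LP relaxation, a zero objective requires $\mvec{\xi} = \mvec{0}$ and $\mvec{\pi} = \mvec{0}$. Constraint \eqref{ones} then says $M_{\mvec{1}} \mvec{q} \geq \mvec{1}$, and \eqref{zeros} says $M_{\mvec{0}} \mvec{q} \leq \mvec{0}$. Together with $\mvec{q} \geq \mvec{0}$ and $\mvec{1}^\top \mvec{q} \leq k$, this means that the rank-$1$ matrices indexed by the support of $\mvec{q}$ each have their $1$-entries confined to $E$ (i.e.\ they are componentwise dominated by $X$), and that the total weight placed on them is at most $k$ while fractionally covering every entry of $E$ to level at least $1$.

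Next, I would bring in the isolated set. Let $S \subseteq E$ be an isolated set of ones with $|S| = i(X)$. Summing the covering inequality over $(i,j) \in S$ and swapping the order of summation gives
\begin{equation*}
i(X) \;=\; |S| \;\leq\; \sum_{(i,j) \in S} \sum_\ell q_\ell (Y_\ell)_{ij} \;=\; \sum_\ell q_\ell \Bigl(\sum_{(i,j) \in S} (Y_\ell)_{ij}\Bigr),
\end{equation*}
where $Y_\ell = \mvec{a}_\ell \mvec{b}_\ell^\top$ is the rank-$1$ binary matrix indexed by $\ell$.

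The crux is the bound $\sum_{(i,j) \in S} (Y_\ell)_{ij} \leq 1$ for every $\ell$ with $q_\ell > 0$. I would prove this directly from Definition \ref{isol_def}: if two distinct members $(i_1,j_1), (i_2,j_2) \in S$ were both covered by $Y_\ell$, then $a_{\ell,i_1} = a_{\ell,i_2} = 1$ and $b_{\ell,j_1} = b_{\ell,j_2} = 1$, forcing $(Y_\ell)_{i_1,j_2} = (Y_\ell)_{i_2,j_1} = 1$; but $Y_\ell \leq X$ (by the first step), so $x_{i_1,j_2} = x_{i_2,j_1} = 1$, contradicting property (2) of isolation (or property (1) if they share a row or column). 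Plugging this bound back in yields $i(X) \leq \sum_\ell q_\ell \leq k$, contradicting $k < i(X)$.

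The only delicate point is making the rank-$1$-containment step rigorous under fractional $\mvec{q}$: one must note that $(M_{\mvec{0}} \mvec{q})_{(i,j)} = \sum_\ell q_\ell (Y_\ell)_{ij} \leq 0$ together with $q_\ell \geq 0$ and $(Y_\ell)_{ij} \in \{0,1\}$ forces $(Y_\ell)_{ij} = 0$ whenever $q_\ell > 0$ and $(i,j) \notin E$. Once this pointwise containment is established, the combinatorial isolation argument carries the rest of the proof.
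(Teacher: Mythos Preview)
Your proof is correct and follows essentially the same approach as the paper: assume the LP optimum is zero, deduce that every active rank-$1$ matrix is a submatrix of $X$, then use the isolation property to show each such matrix can cover at most one element of $S$, yielding $i(X)\le \mvec{1}^\top\mvec{q}\le k$. The paper phrases the last step as a partition of active columns into groups $G_r$ indexed by elements of $S$, whereas you sum the covering inequalities directly and bound the inner sum by $1$; these are equivalent formulations of the same counting argument.
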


\begin{proof}
	Let $k$ be a fixed positive integer and let $X$ be a binary matrix with isolation number $i(X) >k$.
	For a contradiction, assume that $\text{MLP}_{\text{exact}}$ has objective value zero, $\zeta_{\text{MLP}} = 0$.
	\\
	(1.)
	Now, if $\zeta_{\text{MLP}} = 0$ we must have $M_0 \mvec{q} = \mvec{0} = \mvec{\pi}$ in constraint \eqref{zeros} which implies that none of the rank-$1$ binary matrices with $q_\ell>0$ cover zero entries of $X$. In other words, all the rank-$1$ binary matrices active are submatrices of $X$.
	\\
	(2.)
	Now, if $\zeta_{\text{MLP}} = 0$ we must also have $\mvec{\xi} = \mvec{0}$ which implies $M_{\mvec{1}} \mvec{q} \ge \mvec{1} $ in constraint \eqref{ones} for some $\mvec{q} $ which may be fractional but satisfies $\mvec{1}^\top \mvec{q} \le k$. 
	Let $S:=\{(i_1,j_1), \dots, (i_r, j_r), \dots, (i_{|S|}, j_{|S|})\}\subseteq E$ be an isolated set of ones in $X$ of cardinality $i(X)=|S|$.
	Since members of $S$ cannot be contained in a common rank-$1$ submatrix of $X$, all columns $M_\ell$ corresponding to rank-$1$ binary submatrices of $X$ for entries $(i_r,j_r)\in S$ satisfy
	\begin{equation*}
	M_{(i_r,j_r),\ell} =1 \Rightarrow  M_{(i,j),\ell} =0 \;\forall (i,j)\not = (i_r,j_r), (i,j) \in S.
	\end{equation*}
	Therefore, we can partition the active rank-1 binary matrices into $i(X)+1$ groups $G_r$,
	\begin{align}
	G_r &= \{ \ell : q_\ell>0 \text{ and } M_{(i_r,j_r),\ell} =1\} \\
	&  \text{for }\; r=1,\dots,i(X); \nonumber\\
	G_{i(X)+1} &= \{ \ell : q_\ell>0 \text{ and } M_{(i_r,j_r),\ell} =0 \; \forall (i_r,j_r) \in S \}.  
	\end{align}
	While $G_{i(X) + 1}$ may be empty, $G_r$'s for $r=1,\dots,i(X)$ are not empty because we know that
	for all $(i,j)\in E$ we have $\sum_{\ell} M_{(i,j),\ell}\; q_\ell\ge 1$ and $S\subseteq E$.
	Hence for all $(i_r,j_r)\in S$  we have  $\sum_{\ell \in G_r} q_\ell \ge 1$ which implies the contradiction  $ \mvec{1}^\top \mvec{q} \ge \sum_{r=1}^{i(X)} \sum_{\ell \in G_r} q_\ell \ge i(X) > k$ and therefore $\zeta_{\text{MLP}} > 0$.
\end{proof}
Could we replace the condition $k<i(X)$ in Lemma \ref{non-zero-lp}  by a requirement that $k$ has to be smaller than the Boolean rank of $X$? The following example shows that we cannot. 
\setcounter{theorem}{1}
\begin{example}
	Let $X=J_4 - I_4$, where $J_4$ is the $4 \times 4$ matrix of all $1$s and $I_4$ is the $4\times 4$ identity matrix. One can verify that the Boolean rank of $X$ is $4$ and its isolation number is $3$. 
	\begin{equation}
	X =
	\begin{bmatrix}
	0 & 1 & 1 & 1  \\
	1 & 0 & 1 & 1  \\
	1 & 1 & 0 & 1  \\
	1 & 1 & 1 & 0  
	\end{bmatrix}
	\end{equation}
	For $k=3$, the optimal objective value of $\text{MLP}_{\text{exact}}$ is $0$ which is attained by a fractional solution in which the following $6$ rank-$1$ binary matrices are active.
	
	\setlength{\tabcolsep}{3pt} 
	\begin{tabular}{ccc}
		$q_1=\frac{1}{2}$ & $q_2=\frac{1}{2}$ & $q_3=\frac{1}{2}$ \\
		$\begin{bmatrix}
		0 & 0 & 0 & 0  \\
		1 & 0 & 1 & 0  \\
		0 & 0 & 0 & 0  \\
		1 & 0 & 1 & 0  
		\end{bmatrix}$
		&
		$\begin{bmatrix}
		0 & 1 & 1 & 0  \\
		0 & 0 & 0 & 0  \\
		0 & 0 & 0 & 0  \\
		0 & 1 & 1 & 0  
		\end{bmatrix}$
		&
		$\begin{bmatrix}
		0 & 1 & 0 & 1  \\
		0 & 0 & 0 & 0  \\
		0 & 1 & 0 & 1  \\
		0 & 0 & 0 & 0  
		\end{bmatrix}$
	\end{tabular}
	\begin{tabular}{ccc}
		$q_4=\frac{1}{2}$ &$q_5=\frac{1}{2}$ & $q_6=\frac{1}{2}$ \\
		$\begin{bmatrix}
		0 & 0 & 0 & 0  \\
		1 & 0 & 0 & 1  \\
		1 & 0 & 0 & 1  \\
		0 & 0 & 0 & 0  
		\end{bmatrix}$
		&
		$\begin{bmatrix}
		0 & 0 & 1 & 1  \\
		0 & 0 & 1 & 1  \\
		0 & 0 & 0 & 0  \\
		0 & 0 & 0 & 0  
		\end{bmatrix}$
		&
		$\begin{bmatrix}
		0 & 0 & 0 & 0  \\
		0 & 0 & 0 & 0  \\
		1 & 1 & 0 & 0  \\
		1 & 1 & 0 & 0  
		\end{bmatrix}$
	\end{tabular}
\end{example}
%

\subsection{The relation between the LP relaxations of $\text{MIP}_{\text{exact}}$ and MIP($\frac{1}{k}$) }
\label{subsection_coincide}
\begin{lemma}
	For $\rho=\frac{1}{k}$, the optimal objective values of the LP relaxations $\text{MLP}_{\text{exact}}$ and $\text{MLP}(\frac{1}{k})$ coincide. 
\end{lemma}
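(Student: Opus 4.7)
The plan is to show both inequalities $\zeta_\text{MLP} \le z_{\text{MLP}(1/k)}$ and $\zeta_\text{MLP} \ge z_{\text{MLP}(1/k)}$ by eliminating the variable $\mvec{\pi}$ from $\text{MLP}_{\text{exact}}$ analytically. The key observation is that $\mvec{\pi}$ appears only in constraint \eqref{zeros}, namely $M_{\mvec{0}}\mvec{q} \le k\mvec{\pi}$, and in the objective as $\mvec{1}^\top \mvec{\pi}$; since it is being minimised and is bounded below by $\tfrac{1}{k}M_{\mvec{0}}\mvec{q}$, at optimality we should be able to set $\mvec{\pi} = \tfrac{1}{k}M_{\mvec{0}}\mvec{q}$ componentwise.

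First I would check that this choice is LP-feasible, i.e.\ that it also respects the relaxed upper bound $\mvec{\pi} \le \mvec{1}$. Since entries of $M_{\mvec{0}}$ are $0$/$1$ and constraint \eqref{k_rectangles} enforces $\mvec{1}^\top\mvec{q} \le k$, for each row index $(i,j)$ we have $(M_{\mvec{0}}\mvec{q})_{(i,j)} \le \mvec{1}^\top \mvec{q} \le k$, so $\tfrac{1}{k}(M_{\mvec{0}}\mvec{q})_{(i,j)} \in [0,1]$. Hence the implicit $\mvec{\pi}\le \mvec{1}$ is automatic and plays no role.

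For the direction $\zeta_\text{MLP} \le z_{\text{MLP}(1/k)}$, I would take any feasible $(\mvec{q},\mvec{\xi})$ for $\text{MLP}(\tfrac{1}{k})$ and lift it to $(\mvec{q},\mvec{\xi},\mvec{\pi})$ with $\mvec{\pi}:=\tfrac{1}{k}M_{\mvec{0}}\mvec{q}$; by the preceding paragraph this is feasible for $\text{MLP}_\text{exact}$ and has the same objective value $\mvec{1}^\top\mvec{\xi} + \tfrac{1}{k}\mvec{1}^\top M_{\mvec{0}}\mvec{q}$. For the reverse direction, given any feasible $(\mvec{q},\mvec{\xi},\mvec{\pi})$ for $\text{MLP}_\text{exact}$, constraint \eqref{zeros} gives $\mvec{\pi} \ge \tfrac{1}{k}M_{\mvec{0}}\mvec{q}$, so $\mvec{1}^\top\mvec{\pi} \ge \tfrac{1}{k}\mvec{1}^\top M_{\mvec{0}}\mvec{q}$; dropping $\mvec{\pi}$ produces a feasible point of $\text{MLP}(\tfrac{1}{k})$ with an objective value no larger than that of $\text{MLP}_\text{exact}$.

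I do not expect any serious obstacle: the whole argument is a one-variable projection. The only subtlety to watch is the implicit bound $\mvec{\pi}\le \mvec{1}$ inherited from the integrality relaxation $\mvec{\pi}\in\{0,1\}^{nm-|E|} \rightsquigarrow \mvec{\pi}\in[0,1]^{nm-|E|}$; if this bound were ever binding, the projection $\mvec{\pi}=\tfrac{1}{k}M_{\mvec{0}}\mvec{q}$ could be cut off, but the constraint $\mvec{1}^\top\mvec{q}\le k$ together with $M_{\mvec{0}}\in\{0,1\}^{\cdot\times|\RR|}$ rules this out, which is exactly why the factor $\tfrac{1}{k}$ (rather than any other $\rho$) is the one that makes the two LPs coincide.
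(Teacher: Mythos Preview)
Your proof is correct and follows essentially the same approach as the paper: eliminate $\mvec{\pi}$ by observing that at optimality $\mvec{\pi}=\tfrac{1}{k}M_{\mvec{0}}\mvec{q}$, which turns the objective of $\text{MLP}_{\text{exact}}$ into that of $\text{MLP}(\tfrac{1}{k})$. Your version is in fact more careful than the paper's one-line argument, since you explicitly verify that the relaxed box constraint $\mvec{\pi}\le\mvec{1}$ is never binding (via $(M_{\mvec{0}}\mvec{q})_{(i,j)}\le \mvec{1}^\top\mvec{q}\le k$), a point the paper leaves implicit.
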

\begin{proof} 
	It is enough to observe that since $\text{MLP}_{\text{exact}}$ is a minimisation problem, $\mvec{\pi}$ takes the minimal optimal value $\frac{1}{k} M_0 \mvec{q}$ in $\text{MLP}_{\text{exact}}$ due to constraint \eqref{zeros} which equals the second term in the objective \eqref{hamming_obj} of MLP($\frac{1}{k}$). 
\end{proof}
\subsection{Column generation applied to the LP relaxation of the strong formulation of $\text{MIP}_{\text{exact}}$}
\label{strong_lp_sec}

We obtain a modification of $\text{MIP}_{\text{exact}}$ which we call the ``strong formulation" by replacing constraints \eqref{zeros} by exponentially many constraints.
The following is the LP relaxation of the strong formulation of $\text{MIP}_{\text{exact}}$,
\begin{align*}
\label{mlp_strong_exact}
(\text{MLP}_{\text{exact strong}}) &\zeta_{\text{MLP}} = \min  \mvec{1}^\top \mvec{\xi}  +  \mvec{1}^\top\mvec{\pi}  \\
\text{s.t. }
& M_{\mvec{1}} \mvec{q} +\mvec{\xi}\ge \mvec{1}  \\
&(M_{\mvec{0}})_{\ell} \; q_\ell \le \mvec{\pi},\;\;\ell \in [(2^n-1)(2^m-1)] \\
&\mvec{1}^\top \mvec{q} \le k & \\
& \mvec{\xi} \ge \mvec{0}, \mvec{\pi} \in [0,1]^{nm-|E|} , \mvec{q} \in [0,1]^{|\RR|}. 
\end{align*}

\begin{lemma}
	\label{mlp_strong_lemma}
	Applying the CG approach to $\text{MLP}_{\text{exact strong}}$ cannot be used to generate sensible columns.
\end{lemma}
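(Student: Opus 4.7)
\textbf{Proof proposal for Lemma \ref{mlp_strong_lemma}.}
The plan is to compute the pricing subproblem that column generation would produce for $\text{MLP}_{\text{exact strong}}$ and to show that its optimum is always the trivial all-ones rank-$1$ matrix, irrespective of the dual values, so that CG cannot generate columns carrying any structural information about $X$.

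First, I would derive the reduced cost of a column $q_\ell$. Assigning multiplier $\mvec{p}\ge \mvec{0}$ to the constraint $M_{\mvec{1}}\mvec{q}+\mvec{\xi}\ge \mvec{1}$, $\mvec{\sigma}_\ell\ge \mvec{0}$ to each of the exponentially many constraints $(M_{\mvec{0}})_\ell q_\ell\le \mvec{\pi}$, and $\mu\ge 0$ to $\mvec{1}^\top \mvec{q}\le k$, a standard Lagrangian computation gives
$$\bar c_\ell \;=\; \mu \;+\; (M_{\mvec{0}})_\ell^\top \mvec{\sigma}_\ell \;-\; (M_{\mvec{1}})_\ell^\top \mvec{p}.$$

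Second, I would pin down the structural obstacle. In any Restricted $\text{MLP}_{\text{exact strong}}$ only those constraints $(M_{\mvec{0}})_\ell q_\ell\le \mvec{\pi}$ whose index $\ell$ already lies in the current subset are present. A column being priced out is by definition absent from this subset, so the matching constraint and its dual $\mvec{\sigma}_\ell$ do not exist; moreover, by complementary slackness, any $\ell$ with $q_\ell=0$ and $\mvec{\pi}>\mvec{0}$ attains optimal dual value $\mvec{\sigma}_\ell=\mvec{0}$ even in the full formulation. Hence CG has no choice but to compute the reduced cost with $\mvec{\sigma}_\ell=\mvec{0}$, and the pricing problem reduces to
$$\max_{\mvec{a}\in\B^n,\,\mvec{b}\in\B^m}\; \sum_{(i,j)\in E} a_i b_j\, p^*_{ij} \;-\; \mu^*.$$

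Third, I would exploit monotonicity. Since $p^*_{ij}\ge 0$ for every $(i,j)\in E$, the pricing objective is monotone non-decreasing in each binary coordinate $a_i,b_j$, and an optimum is always attained at $\mvec{a}=\mvec{1}$, $\mvec{b}=\mvec{1}$. Pricing therefore returns the all-ones rank-$1$ matrix $\mvec{1}\mvec{1}^\top$, regardless of the current dual $\mvec{p}^*$. Once this column is appended to the Restricted $\text{MLP}_{\text{exact strong}}$, the accompanying constraint $\mvec{1}\, q_\ell\le \mvec{\pi}$ forces $\mvec{\pi}\ge \mvec{1}\, q_\ell$ and inflates the objective by $(nm-|E|)\, q_\ell$, a cost that the pricing subproblem is wholly blind to.

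The main technical step is justifying that one may take $\mvec{\sigma}_\ell=\mvec{0}$ when pricing a column not yet in the restricted problem; as indicated, this follows from complementary slackness applied to $(M_{\mvec{0}})_\ell q_\ell\le \mvec{\pi}$ at $q_\ell=0$. Granting this, the monotonicity collapse of the pricing problem to $\mvec{1}\mvec{1}^\top$ is immediate, and so CG applied to $\text{MLP}_{\text{exact strong}}$ consistently proposes the same trivial column, which cannot reflect the structure of $X$ in any meaningful way.
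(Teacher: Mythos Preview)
Your argument is correct and, in fact, gives a sharper mechanistic explanation than the paper does. The paper's proof proceeds by direct demonstration: it simply adds the all-ones column $t$ times and observes that, because each copy gets its own constraint $(M_{\mvec 0})_\ell q_\ell\le\mvec\pi$, one may set every $q_\ell=1/t$ and $\mvec\pi=\tfrac{1}{t}\mvec 1$, driving the Restricted MLP objective to $\tfrac{1}{t}(nm-|E|)\to 0$. It never actually analyses the pricing problem; it just exhibits the degenerate trajectory.

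Your approach instead explains \emph{why} CG is forced onto that trajectory: because the constraint $(M_{\mvec 0})_\ell q_\ell\le\mvec\pi$ is tied to the column $q_\ell$ itself and hence absent from the restricted master until the column is added, pricing must use $\mvec\sigma_\ell=\mvec 0$, the reduced cost collapses to $\mu^*-\sum_{(i,j)\in E}p^*_{ij}a_ib_j$, and monotonicity in nonnegative $p^*_{ij}$ forces the all-ones rank-$1$ matrix as the optimiser. The two arguments are complementary: yours justifies that the pricing oracle always proposes $\mvec 1\mvec 1^\top$, while the paper's shows the resulting sequence of restricted LPs degenerates to objective value zero. Together they make the point airtight; either alone already substantiates the (informal) lemma. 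One small remark: your complementary-slackness aside about the full LP is correct but inessential---the operative fact is simply that the matching dual constraint is not present in the restricted problem, which you state first and which suffices.
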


\begin{proof} 
	Let us try applying column generation to solve $\text{MLP}_{\text{exact strong}}$ and add a column of all $1$s as our first column $q_1$. Then at the $1$st iteration, for $q_1 = 1$ the objective value of the Restricted MLP is $\zeta_{\text{RMLP}}^{(1)} = 0 + (nm- |E|)$ for solution vector $[\mvec{\xi}^{(1)}, \mvec{\pi}^{(1)},\mvec{q}^{(1)}] = [ \mvec{0}, \mvec{1}, 1]$. Adding the same column of all $1$s at the next iteration and setting $[q_1,q_2] = [\frac{1}{2}, \frac{1}{2}]$, allows us to keep $\mvec{\xi}^{(2)} = \mvec{0}$ but set $\mvec{\pi}^{(2)} = \frac{1}{2}\mvec{1}$ to get $\zeta_{\text{RMLP}}^{(2)} = 0 + \frac{1}{2}(nm- |E|)$. Therefore continuing adding the same column of all $1$s, after $t$ iterations we have $\zeta_{\text{RMLP}}^{(t)} = 0 + \frac{1}{t}(nm- |E|)$ for solution vector $[\mvec{\xi}^{(t)}, \mvec{\pi}^{(t)},\mvec{q}^{(t)}] = [\mvec{0}, \frac{1}{t}\mvec{1}, \frac{1}{t}\mvec{1}]$. Therefore for $t\to \infty$ we have $\zeta_{\text{RMLP}}^{(t)} \to 0$ and we have not generated any other columns but the all $1$s.
\end{proof}

\subsection{The greedy algorithm for bipartite binary quadratic optimisation}
\label{greedy_punnen}
For a given $n\times m$ coefficient matrix $H$, we aim to find $\mvec{a}\in \B^{n}$ and $ \mvec{b}\in \B^m$ so that $\mvec{a}^\top H \mvec{b}$ is maximised.
Let $\gamma^+_i$ be the sum of the positive entries of $H$ for each row $i\in[n]$, $\gamma^+_i := \sum_{j=1}^m \max(0,h_{ij})$. Reorder the rows of $H$ according to decreasing values of $\gamma^{+}_i$.
Algorithm \ref{alg1} is the greedy heuristic of \cite{Punnen:2012} which provides the optimal solution to the bipartite binary quadratic problem if $\min(n,m)\le2$ and has an approximation ratio of $1/(\min(n,m)-1)$ otherwise. 

\begin{algorithm}
	\caption{Greedy Algorithm}
	\label{alg1}
	\SetAlgoLined
	Phase I. Order $i\in[n]$ so that $\gamma^+_{i} \ge \gamma^+_{i+1}$.\\
	Set $\mvec{a}=\mvec{0}_n$, $\mvec{s}=\mvec{0}_m$. \\
	\For{$i\in[n]$}{
		$f_{0} = \sum_{j=1}^m \max ( 0, s_j)$\\
		$ f_1= \sum_{j=1}^m \max ( 0, s_j+ h_{ij})$\\
		\If{$f_{0} < f_1$}{ 
			Set $a_i = 1$, $\mvec{s} = \mvec{s} + \mvec{h}_i$
		}
	}
	Phase II. Set $\mvec{b}=\mvec{0}_m$.\\
	\For{$j\in[m]$}{
		\If{$(\mvec{a}^\top H)_j>0$}{
			Set $b_j = 1$}
	}
\end{algorithm}
%

\subsection{Rank-$k$ greedy heuristic}
\label{rank-k greedy}
For a given $X\in \B^{n\times m}$ and $k\in \Z_+$, according to Equation \eqref{frob_obj} we may write $\min\|X-Z\|_2^F$ as $|E| - \max \sum_{i\in[n],j\in[m]} h_{ij} z_{ij}$ were $h_{ij}$ are entries of $H:=2X-1$. We propose the  heuristic in Algorithm \ref{ALGO_greedy_rank_k} to compute $k$-BMF by sequentially computing $k$ rank-$1$ binary matrices using the greedy algorithm of \cite{Punnen:2012} given in Algorithm \ref{alg1}. 

We remark that this rank-$k$ greedy algorithm can be used to obtain a heuristic solution to $k$-BMF under standard arithmetic as well: simply modify the last line to $$H[\mvec{a} \mvec{b}^\top==1] = - K$$ for a large enough positive number $K$ (say $K:=\sum_{ij}x_{ij}$) so that each entry of $X$ is covered at most once.

\begin{algorithm}[h]
	\SetAlgoLined
	\caption{\label{ALGO_greedy_rank_k}Greedy algorithm for rank-$k$ binary matrix factorisation}
	Input: $X\in\B^{n\times m}$, $k\in \Z_+$. \\
	Set $H := 2X -1$.\\
	\For{$\ell\in[k]$}{
		$\mvec{a}, \mvec{b} = \text{Greedy} (H)$ \tcp{Compute a rank-1 binary matrix via the greedy algorithm}
		$\mvec{a}, \mvec{b} = \text{Alt} (H, \mvec{a}, \mvec{b})$  \tcp{Improve greedy solution with the alternating heuristic}
		$A_{:,\ell} = \mvec{a}$ \\
		$B_{\ell,:} = \mvec{b}^\top$\\
		$H[\mvec{a} \mvec{b}^\top==1] = 0$ \tcp{Set entries of $H$ to zero that are covered}
	}
	Output: $A\in\B^{n\times k}$, $B\in\B^{k\times m}$
\end{algorithm}

\subsection{Formulations evaluated in Table \ref{gap}}
\label{app_table_2_formulations}
The formulation KGH17 of \cite{Kovacs:2017} with new objective function \eqref{app_obj} for $\rho=1$ that was evaluated to get results in column KGH17 of Table \ref{gap} reads as
\begin{align*}
\min_{a,b, z}
& \sum_{(i,j) \in E} (1-z_{ij} )+ \sum_{(i,j) \not \in E} \sum_{\ell=1}^k y_{i\ell j} \\
\text{s.t. }
&  z_{ij} \le \sum_{\ell=1}^ky_{i\ell j}, ~~~~~~~~~~~~~~\,(i,j)\in E,\\
& y_{i\ell j}\in MC(a_{i\ell},b_{\ell j}),~~~~~i\in[n],\ell\in[k], j\in[m],\\
& z_{ij} \in [0,1], ~~~~~~~~~~~~~~~~~~~~(i,j)\in E,\\
& a_{i\ell},b_{\ell j} \in \B, ~~~~~~~~~~~~i\in[n],\ell\in[k], j\in[m],
\end{align*}
with $MC(a,b) = \{y \in \R: a+b -1 \le y, y\le a, y\le b, 0\le y \}$ denoting the McCormick envelopes as defined in Section \ref{section_formulation}.
In the exponential formulation of \cite{Lu:2008:OBM:1546682.1547186} all possible non-zero binary row vectors $\mvec{\beta}_{t}\in\B^{1\times m}$ ($t\in[2^m-1]$) for factor matrix $B\in\B^{k\times m}$ are explicitly enumerated and treated as fixed input parameters to the formulation. The formulation LVA08  with objective function \eqref{app_obj} for $\rho=1$ that was evaluated to get results in column LVA08 of Table \ref{gap} reads as
\begin{align*}
\min_{\alpha,\delta,z}
&  \sum_{(i,j) \in E} (1-z_{ij} )+ \sum_{(i,j) \not \in E} \sum_{t=1}^{2^m-1} \alpha_{it}\, \beta_{tj}  \\
\text{s.t. }
& z_{ij} \le \sum_{t=1}^{2^m-1} \alpha_{it}\, \beta_{tj} , ~~~~ (i,j)\in E,\\
& \alpha_{it} \le \delta_t, ~~~~~~~~~~~~~~~~~~~~~ i\in[n],t\in[2^m-1],\\
& \sum_{t=1}^{2^m-1} \delta_t \le k, \\
& z_{ij} \in[0,1], ~~~~~~~~~~~~~~~~(i,j)\in E,\\
& \alpha_{i t},\delta_{t} \in \B,  ~~~~~~~~~i\in[n],t\in[2^m-1].
\end{align*}

\subsection{Datasets}
\label{data}
In general if a dataset has a categorical feature $C$ with $N$ discrete options $v_j$, $(j\in [N])$, we convert feature $C$ into $N$ binary features $B_j$ $(j\in N)$ so that if the $i$-th sample takes value $v_j$ for $C$ that is $(C)_i = v_j$, then we have value $(B_j)_i = 1$ and $(B_\ell)_i = 0$ for all $\ell\not = j \in [N]$. This techinque of binarisation of categorical columns has been applied in \cite{Kovacs:2017} and \cite{Barahona:2019}.
The following datasets were used:
\begin{itemize}
	\item 
	The Zoo dataset (\textit{zoo})  \cite{ZOO} describes $101$ animals with $16$ characteristic features. All but one feature is binary. The categorical column which records the number of legs an animal has, is converted into two new binary columns indicating if the number of legs is \textit{less than or equal} or \textit{greater} than four. The size of the resulting fully binary dataset is $101 \times 17$.
	
	\item 
	The Primary Tumor dataset (\textit{tumor}) \cite{TUMOR} contains observations on $17$ tumour features detected in $339$ patients. The features are represented by $13$ binary variables and $4$ categorical variables with discrete options. The $4$ categorical variables are converted into $11$ binary variables representing each discrete option. Two missing values in the binary columns are set to value 0. The final dimension of the dataset is $339 \times 24$.
	
	\item 
	The Hepatitis dataset (\textit{hepat}) \cite{HEP}  consists of 155 samples of medical data of patients with hepatitis. The 19 features of the dataset can be used to predict whether a patient with hepatitis will live or die. 
	6 of the 19 features take numerical values and are converted into 12 binary features corresponding to options: \textit{less than or equal to the median value}, and \textit{greater than the median value}. The column that stores the sex of patients is converted into two binary columns corresponding to labels man and female. The remaining 12 columns take values \textit{yes} and \textit{no} and are converted into 24 binary columns. The raw dataset contains 167 missing values, and according to the above binarisation if a sample has missing entry for an original feature it will have 0's in both columns binarised from that original feature. The final binary dataset has dimension $155 \times 38$.
	
	\item 
	The SPECT Heart dataset (\textit{heart}) \cite{SPECT} describes cardiac Single Proton Emission Computed Tomography images of $267$ patients by $22$ binary feature patterns. $25$ patients' images contain none of the features and are dropped from the dataset, hence the final dimension of the dataset is $242 \times 22$.
	
	\item The Lymphography dataset (\textit{lymp})  \cite{LIM} contains data about lymphography examination of $148$ patients. $8$ features take categorical values and are expanded into $33$ binary features representing each categorical value. One column is numerical and we convert it into two binary columns corresponding to options: \textit{less than or equal to median value}, and \textit{larger than median value}. The final binary dataset has dimension $148\times 44$.
	
	\item 
	The Audiology Standardized dataset (\textit{audio}) \cite{AUDIO} contains clinical audiology records on $226$ patients. The $69$ features include patient-reported symptoms, patient history information, and the results of routine tests which are needed for the evaluation and diagnosis of hearing disorders. $9$ features that are categorical valued are binarised into $34$ new binary variables indicating if a discrete option is selected. The final dimension of the dataset is $226\times 94$.
	
	\item 
	The Amazon Political Books dataset (\textit{books}) \cite{BOOKS} contains binary data about $105$ US politics books sold by Amazon.com. Columns correspond to books and rows represent frequent co-purchasing of books by the same buyers. The dataset has dimension $105\times 105$.
	
	\item 
	The 1984 United States Congressional Voting Records dataset (\textit{votes})\cite{VOTING} includes votes for each of the U.S. House of Representatives Congressmen on the $16$ key votes identified by the CQA. The $16$ categorical variables taking values of ``voted for'', ``voted against'' or ``did not vote'', are converted into $32$ binary variables. One congressman did not vote for any of the bills and its corresponding row of zero is dropped. The final binary dataset has dimension $434\times 32$.

\end{itemize}

\subsection{Data preprocessing}
In practice, the input matrix $X\in\{0,1\}^{n\times m}$ may contain zero rows  or columns. Deleting a zero row (column) leads to an equivalent problem whose solution $A$ and $B$ can easily be translated to a solution of the original dimension. 
In addition, if a row (column) of $X$ is repeated $\alpha_i$ ($\beta_j$) times, it is sufficient to keep only one copy of it, solve the reduced problem and reinsert the relevant copies in the corresponding place. To ensure that the objective function of the reduced problem corresponds to the factorisation error of the original problem, the variable corresponding to the representative row (column) in the reduced problem is multiplied by $\alpha_i$ ($\beta_j$).

\subsection{Comparison Methods }
\label{app_comp}
The following methods were evaluated for the comparison in Table \ref{COMP}.
\begin{itemize}
	\item The code for our methods can be found at \url{https://github.com/kovacsrekaagnes/rank_k_BMF}.
	\item For the alternating iterative local search algorithm  of \cite{Barahona:2019}  (ASSO++) we obtained the code from the author's github page. 
	The code implements two variants of the algorithm and we report the smaller error solution from two variants of it. 
	\item 
	The greedy algorithm ($k$-greedy) detailed in Appendix \ref{rank-k greedy} is evaluated with nine different orderings and the best result is chosen.
	\item For the method of  \cite{Zhang:2007}, we used an implementation in the github pymf package by Chris Schinnerl and we ran it for 10000 iterations.
	\item We evaluated the heuristic method ASSO \cite{Miettinen:2006:PKDD} which  depends on a parameter and we report the best results across nine parameter settings ($\tau \in \{0.1, 0.2,\dots ,0.9 \}$). The code was obtained form the webpage of the author.
	\item  In addition, we computed rank-$k$ non-negative matrix factorisation (NMF) and binarise it by a threshold of $0.5$: after an NMF is obtained, values greater than $0.5$ are set to $1$, otherwise to $0$.  For the computation of NMF we used \texttt{sklearn.decomposition} module in Python. 
	\item For the MEBF method \cite{Wan:2020} we used the code from the author's github page. The raw code downloaded contained a bug and did not produce a solution for some instances while for others it produced factorisations whose error in $\|\cdot\|_F^2$ increased with the factorisation rank $k$. We fixed the code and the results shown in \ref{COMP} correspond to the lowest error for each instance selected across 9 parameter settings $t\in \{0.1,\dots, 0.9\}$.
\end{itemize}

\end{document}